\newtheorem{thm}{Theorem}[section]
\newtheorem{lem}[thm]{Lemma}
\newtheorem{prop}[thm]{Proposition}
\theoremstyle{definition}
\theoremstyle{remark}
\begin{document}
 \author[Omar Ajebbar and Elhoucien Elqorachi ]{Omar Ajebbar $^{2}$ and Elhoucien Elqorachi $^{1}$}
	
	\address{%
		$^1$
	 Ibn Zohr University, Faculty of sciences,
		Department of mathematics,
		Agadir,
		Morocco}
	\address{%
	$^2$
	Sultan Moulay Slimane University, Multidisciplinary Faculty,
	Department of mathematics and computer science,
	Beni Mellal,
	Morocco}
\email{omar-ajb@hotmail.com, elqorachi@hotmail.com }
\thanks{2020 Mathematics Subject Classification: Primary: 39B52. Secondary: 39B32.\\ Key words and phrases:Semigroup, Cosine-sine functional equation, Automorphism, Involution.}
\title[A generalization of the Cosine-Sine functional]{A generalization of the Cosine-Sine functional equation on a semigroup}
\begin{abstract}
Given a semigroup $S$ equipped with an
		involutive automorphism $\sigma$, we determine the complex-valued solutions $f,g,h$ of the functional equation
		\begin{equation*}f(x\sigma(y))=f(x)g(y)+g(x)f(y)+h(x)h(y),\,\,x,y\in S,\end{equation*}
		in terms of multiplicative functions and solutions of the special cases of sine and cosine-sine functional equations.
\end{abstract}
	\maketitle\emph{}
	\section{Introduction}Let $S$ be a semigroup. The cosine-sine functional equation is the functional equation
	\begin{equation}\label{eq0}f(xy)=f(x)g(y)+g(x)f(y)+h(x)h(y),\end{equation} for three unknown functions $f,g,h:S\to\mathbb{C}$.
	\par Chung, Kannappan and Ng \cite{CKN} solved
	the functional (\ref{eq0}) on groups. They expressed the solutions as linear combination of additive, multiplicative functions and their product whose coefficients enter into matrix relations.
	\par The authors \cite{AjElq} solved the following generalization of (\ref{eq0})
	\begin{equation}\label{eq1}f(x\sigma(y))=f(x)g(y)+g(x)f(y)+h(x)h(y),\end{equation}
	where $\sigma:S\to S$ is an involutive automorphism, on semigroups generated by their squares.
	\par In \cite{H.St1} Stetk{\ae}r solved the functional equation \begin{equation*}
	f(xy)=f(x)g(y)+g(x)f(y)-g(x)g(y)
	\end{equation*}
	on semigroups.
	\par Ebanks solved the functional equation (\ref{eq0}) on a t-compatible topological semigroup \cite [Theorem 5.1]{EB1}.
	\par Recently, the functional equation (\ref{eq0}) have been solved by Ebanks \cite{EB2}, and the authors \cite{AjElq2} on semigroups.
	\par The functional equation (\ref{eq1}) is a simple example of Levi-Civit\'{a}'s functional
	equation, and there is a general theory about the general form of
	the structure of solutions of Levi-Civit\'{a}'s functional equation
	on monoids, using matrix-coefficients of the right regular
	representation, see for example \cite[Theorem 5.2]{H.St}.
	We refer also to \cite{Acz} and \cite{H.St} for further contextual and historical discussions.\\The sine addition formula
	\begin{equation}\label{qo1}
	\phi (xy) = \phi (x)\chi(y) + \chi(x)\phi (y),\;x, y \in S,
	\end{equation}  and the functional equation
	\begin{equation}\label{qo2}
	f (xy) = f (x)\chi(y) + \chi(x)f (y)+\phi(x)\phi(y),\;x, y \in S,
	\end{equation}  where $\chi$ is a multiplicative function were recently solved by Ebanks \cite{E1}, \cite{E2} and  \cite{EB} on  semigroups.
	The  solutions of (\ref{qo1}) on abelian  groups, semigroups generated by it's squares was given by Vincze \cite{vin},   Ebanks and Stetkaer in \cite{stet1}.\\
	The theory developed for solving Levi–Civita functional equations is more comprehensive on groups than it is on semigroups, due to
	the existence of prime ideals in semigroups. The  solutions of (\ref{qo1}) on any group are
	exponential polynomials, whereas  this is not generally
	the case on in a semigroup generated by their squares. More precisely,  The solution of (\ref{qo1}) on a group
	contains a term $A\chi$, with $A$ additive and $\chi$ non-zero multiplicative function, on a semigroup generated by their squares  we
	may see instead the term
	$\phi(x)=B(x)\mu(x)$ if $\mu(x) \neq 0$, and $\phi(x) =
	0$
	if $\mu(x)=0$ or even more complicated (see for example \cite{Eba}, \cite{E1} and  \cite{EB}), where $B$ is an additive function defined on the subsemigroup, and where $\mu$ is non-zero multiplicative function. So there is an increase
	in complexity of solution forms of Levi-Civita equations as we move from the
	world of groups to the larger world of semigroups.
	\par Our main goal is to
	give the general solution of (\ref{eq1}) on all semigroups.  So, the new step forward is that we go from semigroups generated by their squares \cite{AjElq}   to general semigroup $S$.
	The subsemigroup $S^{2}$ will play a crucial role in processing of the functional equation (\ref{eq1}), and the essential role of the linear independence of the even part of the solutions $f$ and $h$  of (\ref{eq1})  is clear from our formulations.\par The solutions are expressed in terms of multiplicative functions and solutions of the special cases of sine (\ref{qo1}) and cosine-sine (\ref{qo2}) functional equations without disjoining their forms on partitions of $S$ using prime ideals corresponding to multiplicative functions.
	\par The organization of the paper is as follows. In Sect. 2, we give notations and terminology. In Sect. 3, we give preliminary results an key properties that we will need in the paper. We present  the current state of knowledge about equation (\ref{eq0}) in Proposition 3.2. We describe the properties of  the solutions of (\ref{eq1}) in five technical lemmas of the special cases:  $f^{e}\neq0$ on $S^{2}$ (Lemma 3.5); $f^{e}\neq0$ and  $f^{e}=0$ on $S^{2}$ (Lemma 3.6); and ($f^{e},h^{e}$) linearly independent (Lemma 3.7).  The
	main results are given in the fourth section. We  discuss two principal cases according to whether $f^{e}$ and $h^{e}$ are linearly independent or not. We also take into account the fact that $f^{e}$ is zero on  $S^{2}$ or not.
	\section{Notations and terminology}
	Throughout this paper $S$ denotes a semigroup. That is a set with an
	associative composition. The map $\sigma:S\to S$ denotes an involutive automorphism. That
	$\sigma$ is involutive means that $\sigma(\sigma(x))=x$ for all
	$x\in S$.\\
	We define $S^{2}:=\{xy\mid x\in S\,\text{and}\,y\in S\}.$\\
	Let $f:S\to \mathbb{C}$ be a function. We define $f^{*}:=f\circ\sigma$. We call $f^{e}:=\frac{f+f^{*}}{2}$ the even part of $f$ and $f^{o}:=\frac{f-f^{*}}{2}$ its odd part. The function $f$ is said to be even if $f^{*}=f$, and $f$ is said to be odd if $f^{*}=-f$.\\
	$f$ is said to be central if $f(xy)=f(yx)$ for all $x,y\in S$, $f$ is said to be additive if $f(xy)=f(x)+f(y)$ for all $x,y\in S$ and $f$ is said multiplicative if $f(xy)=f(x)f(y)$ for all $x,y\in S$.\\
	Let $T$ be a non empty subset of $S$. $f$ is zero on $T$ if $f(x)=0$ for all $x\in T$, we denote $f=0$ on $T$. $f$ is non-zero on $T$ if there exists an element $x\in T$ such that $f(x)\neq0$. We denote $f\neq0$ on $T$.\\
	Let $\chi,\varphi,\psi:S\to\mathbb{C}$ be functions such that $\chi$ is multiplicative. we say that\\
	$\varphi$ is $\chi$-additive if the pair $(\varphi,\chi)$ satisfies the special case of the sine addition law $\varphi(xy)=\varphi(x)\chi(y)+\chi(x)\varphi(y),\,\,x,y\in S.$ Notice that $\varphi$ is $1$-additive ($\chi=1$) means that $\varphi$ is additive, while $\varphi$ is $0$-additive ($\chi=0$) means that $\varphi$ is zero on $S^{2}$.\\
	$\psi$ is of type $(\chi,\sigma,\varphi)$-cosine-sine if the triple $(\psi,\chi,\varphi)$ satisfies the special case of the cosine-sine functional equation $\psi(x\sigma(y))=\psi(x)\chi(y)+\chi(x)\psi(y)+\varphi(x)\varphi(y)\,\,x,y\in S,$ where $\varphi$ is $\chi$-additive.\\
	$\psi$ is of type $(\chi,\varphi)$-cosine-sine if it is of type $(\chi,I_{S},\varphi)$-cosine-sine, where $I_{S}$ is the identity function on $S$.
\section{Preliminary results}
\subsection{About multiplicative functions, sine addition law and cosine-sine functional equation}
\begin{prop}\label{prop001}
Let $S$ be a semigroup and $\varphi:S\to\mathbb{C}$ be a $\chi$-additive function.
\par If $\varphi=\Sigma_{j=1}^{N}c_{j}\chi_{j}$, where
$c_{j}\in\mathbb{C}$ and $\chi_{j}:S\to\mathbb{C}$ is
multiplicative for each $j=1,2,\ldots{},N$, then $\varphi=0$.
\end{prop}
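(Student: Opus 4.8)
The plan is to substitute the assumed form of $\varphi$ into the sine addition law and then invoke linear independence of multiplicative functions. First I would make a harmless reduction: by collecting together equal $\chi_{j}$'s and discarding every index $j$ for which $c_{j}=0$ or $\chi_{j}=0$ (the zero function being trivially multiplicative), I may assume that the functions $\chi_{1},\dots,\chi_{N}$ appearing in $\varphi=\sum_{j=1}^{N}c_{j}\chi_{j}$ are pairwise distinct and nonzero and that each $c_{j}\neq0$; it then suffices to prove that this forces $N=0$, i.e. $\varphi=0$. Plugging $\varphi=\sum_{j}c_{j}\chi_{j}$ into $\varphi(xy)=\varphi(x)\chi(y)+\chi(x)\varphi(y)$ and using $\chi_{j}(xy)=\chi_{j}(x)\chi_{j}(y)$ yields the identity
\begin{equation*}
\sum_{j=1}^{N}c_{j}\chi_{j}(x)\chi_{j}(y)=\chi(y)\sum_{j=1}^{N}c_{j}\chi_{j}(x)+\chi(x)\sum_{j=1}^{N}c_{j}\chi_{j}(y),\quad x,y\in S.
\end{equation*}

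The key input is the classical fact that pairwise distinct nonzero multiplicative functions on a semigroup are linearly independent (see, e.g., \cite{H.St}). I would then split into three cases according to the position of $\chi$. If $\chi=0$, the identity above reduces to $\sum_{j}c_{j}\chi_{j}(x)\chi_{j}(y)=0$; fixing $y$, the left side is a linear combination of the pairwise distinct nonzero multiplicative functions $x\mapsto\chi_{1}(x),\dots,x\mapsto\chi_{N}(x)$ that vanishes identically, so $c_{j}\chi_{j}(y)=0$ for every $j$ and every $y$, contradicting $c_{j}\neq0$, $\chi_{j}\neq0$; hence $N=0$. If $\chi\neq0$ and $\chi\notin\{\chi_{1},\dots,\chi_{N}\}$, then $\chi_{1},\dots,\chi_{N},\chi$ are pairwise distinct nonzero multiplicative functions; fixing $y$ in the identity and reading off, as a function of $x$, the coefficient of $\chi(x)$ gives $\sum_{j}c_{j}\chi_{j}(y)=0$, that is $\varphi(y)=0$. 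Finally, if $\chi=\chi_{k}$ for some index $k$, fixing $y$ and reading off the coefficient of $\chi_{k}(x)$ among $\chi_{1}(x),\dots,\chi_{N}(x)$ again produces $-\varphi(y)=0$. In every case $\varphi=0$.

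There is essentially one obstacle, namely having the linear-independence theorem at hand in the semigroup setting; granting it, the rest is bookkeeping. The points needing a little care are (i) carrying out the reduction to pairwise distinct nonzero $\chi_{j}$ at the very start, since ``multiplicative'' here allows both the zero function and repetitions, and (ii) treating the degenerate configurations $\chi=0$ and $\chi\in\{\chi_{1},\dots,\chi_{N}\}$ separately, because in those cases the naive count of ``$N+1$ independent functions'' in the displayed identity breaks down and one must instead isolate the coefficient of the single appropriate multiplicative function.
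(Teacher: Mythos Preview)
Your proof is correct and follows essentially the same approach as the paper: substitute the linear combination into the sine addition law, rearrange as a linear relation among distinct nonzero multiplicative functions (in $x$, with $y$ fixed), and read off the coefficient of $\chi$ using linear independence (\cite[Theorem~3.18(b)]{H.St}). The paper packages all three of your cases into the single identity $\varphi(y)\chi+\sum_{j}c_{j}(\chi(y)-\chi_{j}(y))\chi_{j}=0$ and observes that any term with $\chi_{j}=\chi$ drops out automatically, but this is only a cosmetic difference; your explicit treatment of $\chi=0$ is in fact slightly more careful than the paper's ``without loss of generality $\chi\neq0$''.
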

\begin{proof} Without loss of generality we assume that $\chi\neq 0$, and that $\chi_{1},...,\chi_{N}$ are non-zero and different. Let $x,y\in S$ be arbitrary. We have $$\varphi(xy)=\varphi(x)\chi(y)+\chi(x)\varphi(y)=\Sigma_{j=1}^{N}c_{j}\chi_{j}(xy)$$ for all $x,y\in S$. Then $$\varphi(y)\chi+\Sigma_{j=1}^{N}c_{j}(\chi(y)-\chi_{j}(y))\chi_{j}=0$$ for each arbitrary $y\in S$.
If $\chi_{j}=\chi$ for some $j$, then the corresponding term
$c_{j}(\chi(y)-\chi_{j}(y))\chi_{j}$ of the identity below vanishes, so
the multiplicative function $\chi$ does not occur in the sum. According
to \cite[Theorem 3.18(b)]{H.St} we obtain from the identity below that
$\varphi(y)=0$. So, $y$ being arbitrary, we deduce that $\varphi=0$. This
completes the proof of Proposition \ref{prop001}.
\end{proof}
\begin{prop}\label{prop1}\cite[Proposition 3.4]{AjElq2} Let $f,g,h:S\to\mathbb{C}$ be solutions of the functional equation
$$f(xy)=f(x)g(y)+g(x)f(y)+h(x)h(y),\,\,x,y\in S,$$ such that $f$ and $h$ are linearly independent. Then the triple $(f,g,h)$ satisfies one of the following possibilities:\\
(1) There exist a constant $\beta\in\mathbb{C}$ and two multiplicative functions $m,\chi:S\to\mathbb{C}$ such that
\begin{equation*}g=\chi,\,h-\beta f=\varphi\,\,\text{and}\,\,\chi+\beta h=m,\end{equation*}
where $\varphi:S\to\mathbb{C}$ is $\chi$-additive\\
(2) There exist constants $\alpha\in\mathbb{C}\setminus\{0\}, \beta\in\mathbb{C}$ and a multiplicative function $\chi:S\to\mathbb{C}$ such that
\begin{equation*}-\alpha\,f+g=\chi\end{equation*}
and the pair $(h,\chi+2\,\alpha f+\dfrac{1}{2}\,\beta h)$ satisfies the sine addition law, i.e.,
\begin{equation*}h(xy)=h(x)(\chi+2\,\alpha f+\dfrac{1}{2}\,\beta h)(y)+(\chi+2\,\alpha f+\dfrac{1}{2}\,\beta h)(x)h(y),\,\,x,y\in S.\end{equation*}
(3) $$f=F,\,\,\,\,g=-\dfrac{1}{2}\delta^{2}\,F+G+\delta\,H,\,\,\,\,h=-\delta
\,F+H,$$
where $\delta\in\mathbb{C}$ is a constant and $F,G,H:S\to\mathbb{C}$ are functions such that the triple $(F,G,H)$ satisfies (1)-(2).
\end{prop}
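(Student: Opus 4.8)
\noindent\emph{Proof strategy.}
The plan is to use associativity of $S$ together with the linear independence of $f$ and $h$ to produce a multiplicative function of the form $\chi=g+pf+qh$ with $p,q\in\mathbb{C}$, and then to apply the substitution
\[
(f,g,h)\longmapsto\big(\,f,\ g-\tfrac{1}{2}\delta^{2}f-\delta h,\ h+\delta f\,\big),\qquad\delta\in\mathbb{C},
\]
which a direct computation shows maps solutions of the equation to solutions, so as to bring the general solution into the form of cases (1)--(2). Two preliminary remarks: interchanging $x$ and $y$ in the equation and subtracting shows that $f$ is central; and evaluating $f(xyz)$ first as $f((xy)z)$ and then as $f(x(yz))$, substituting the equation for $f(xy)$ and for $f(yz)$, gives after cancellation the identity
\[
f(x)\big[g(y)g(z)-g(yz)\big]+h(x)\big[h(y)g(z)-h(yz)\big]+f(z)\big[g(xy)-g(x)g(y)\big]+h(z)\big[h(xy)-g(x)h(y)\big]=0
\]
for all $x,y,z\in S$.

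First I would invoke the linear independence of $f,h$. Picking $z_{1},z_{2}\in S$ for which $\big(\begin{smallmatrix}f(z_{1})&h(z_{1})\\ f(z_{2})&h(z_{2})\end{smallmatrix}\big)$ is invertible and solving the identity above for the two brackets depending on $(x,y)$ yields functions $a,b,c,d:S\to\mathbb{C}$ with
\[
g(xy)=g(x)g(y)+a(y)f(x)+b(y)h(x),\qquad h(xy)=g(x)h(y)+c(y)f(x)+d(y)h(x).
\]
Then I would pin down $a,b,c,d$: substituting these two relations back into the associativity identity, and carrying out the analogous associativity computation on the relations for $g$ and $h$ themselves (comparing $g((xy)z)$ with $g(x(yz))$, and likewise for $h$), while using the linear independence of $f,h$ at each stage, forces $a,b,c$ to be combinations of $f$ and $h$ with constant coefficients and $d$ a combination of $f,g,h$ with constant coefficients, and collapses the remaining constants to a short list tied by one polynomial relation.

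With $a,b,c,d$ explicit, I would search for $p,q$ making $\chi:=g+pf+qh$ multiplicative: comparing the coefficients of the bilinear terms in $\chi(xy)=\chi(x)\chi(y)$ with the explicit relations reduces this to a single cubic equation in $q$ (with $p$ then determined), which has a root in $\mathbb{C}$, so such a $\chi$ always exists. Rewriting $\chi$ in the sheared variables $F=f$, $H=h+\delta f$, $G=g-\tfrac{1}{2}\delta^{2}f-\delta h$ gives $\chi=G+(\delta+q)H+\big(p-\tfrac{1}{2}\delta^{2}-q\delta\big)F$; taking $\delta=-q$ annihilates the $H$-coefficient, so the sheared triple $(F,G,H)$, again a solution, satisfies $G-\alpha F=\chi$ with $\alpha=-\big(p+\tfrac{1}{2}q^{2}\big)$. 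If $\alpha=0$, then $G=\chi$ is multiplicative; substituting this into the sheared relations for $F$ and $H$ and simplifying shows that $H$, together with $\chi$, solves an equation whose solutions are read off directly, producing a constant $\beta$ and a multiplicative $m$ with $H-\beta F$ being $\chi$-additive and $\chi+\beta H=m$ --- this is case (1). If $\alpha\neq0$, the sheared relation for $H$ rearranges into the sine addition law for the pair $\big(H,\ \chi+2\alpha F+\tfrac{1}{2}\beta H\big)$ for a suitable $\beta$ --- this is case (2). Undoing the shearing writes the original triple in the form of case (3); when the cubic admits the root $q=0$ no shearing is needed and $(f,g,h)$ itself is of type (1) or (2). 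Proposition~\ref{prop001} is used along the way to discard degenerate branches (a $\chi$-additive function that is a combination of multiplicative functions vanishes).

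The hard part will be the reduction of the four auxiliary functions $a,b,c,d$ to explicit form and the ensuing bookkeeping of constants, which is lengthy though essentially mechanical; and the shearing parameter $\delta$ must be chosen with care so that the single remaining degree of freedom places the triple in precisely one of the three listed cases. Once the relations for $g$ and $h$ are explicit, the final identifications --- that the relevant functions are $\chi$-additive, multiplicative, or satisfy the sine addition law --- are routine substitutions.
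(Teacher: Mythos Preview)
The paper does not actually prove this proposition: it is quoted verbatim from the authors' earlier work \cite[Proposition~3.4]{AjElq2} and used here as a black box, so there is no ``paper's own proof'' to compare against. Your outline is nonetheless a reasonable and essentially standard route to such a result: the associativity identity you wrote down is correct, the extraction of relations for $g(xy)$ and $h(xy)$ via the linear independence of $f,h$ is the natural next step, and the shearing $(f,g,h)\mapsto(f,\,g-\tfrac{1}{2}\delta^{2}f-\delta h,\,h+\delta f)$ does preserve solutions and is exactly the transformation encoded in case~(3) of the statement. The reduction of $a,b,c,d$ to constant combinations and the solution of the cubic for $q$ are where the real work lies, and you are right that this is mechanical but lengthy; one point to be careful about is that linear independence of $f$ and $h$ must be re-invoked after the shear (it is preserved, since $F=f$ and $H=h+\delta f$), and that the analysis of the $\alpha=0$ branch requires a further case split to produce the constant $\beta$ and the second multiplicative function $m$. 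Your sketch is credible, but since the present paper treats the proposition as known, a full verification would require consulting \cite{AjElq2} directly.
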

\subsection{Key properties of solutions}
\begin{lem}\label{lem0}Let $f,g,h:S\to\mathbb{C}$ be a solution of (\ref{eq1}). Then\\
(1) $f^{e}(xy)=f^{e}(yx)$ for all $x,y\in S$, i.e., $f^{e}$ is central.\\
(2) $f^{o}(xy)=-f^{o}(yx)$ for all $x,y\in S$.\\
(3) $f^{o}(xyz)=0$ for all $x,y,z\in S$.\\
(4)\begin{equation}\label{eq2}f^{o}(xy)+f^{o}(x\sigma(y))=2f^{o}(x)g^{e}(y)+2g^{o}(x)f^{e}(y)+2h^{o}(x)h^{e}(y)\end{equation}
and
\begin{equation}\label{eq3}f^{e}(xy)+f^{e}(x\sigma(y))=2f^{e}(x)g^{e}(y)+2g^{e}(x)f^{e}(y)+2h^{e}(x)h^{e}(y)\end{equation}
for all $x,y\in S$.
\end{lem}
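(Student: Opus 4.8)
The plan is to derive all four assertions from (\ref{eq1}) by elementary substitutions, using only that $\sigma$ is an involutive automorphism, i.e. $\sigma\circ\sigma=I_{S}$ and $\sigma(xy)=\sigma(x)\sigma(y)$ for all $x,y\in S$.

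First I would observe that the right-hand side of (\ref{eq1}) is symmetric in $x$ and $y$ (all products being taken in $\mathbb{C}$), so interchanging $x$ and $y$ gives $f(x\sigma(y))=f(y\sigma(x))$ for all $x,y\in S$. Replacing $y$ by $\sigma(y)$ in this identity, and using $\sigma\circ\sigma=I_{S}$ together with the multiplicativity of $\sigma$, yields the key relation $f(xy)=f^{*}(yx)$ for all $x,y\in S$. Combining it with the relation obtained by interchanging $x$ and $y$ then gives the centrality of $f^{e}$ in (1) and the antisymmetry $f^{o}(xy)=-f^{o}(yx)$ in (2). For (3) I would iterate (2): from $f^{o}(uv)=-f^{o}(vu)$ we obtain $f^{o}(xyz)=-f^{o}(yzx)=f^{o}(zxy)=-f^{o}(xyz)$, hence $f^{o}(xyz)=0$.

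For part (4) I would write down the four translates of (\ref{eq1}) obtained by leaving $(x,y)$ as is, by replacing $y$ with $\sigma(y)$, by replacing $x$ with $\sigma(x)$, and by replacing both; this expresses $f(xy)$, $f^{*}(xy)$, $f(x\sigma(y))$ and $f(\sigma(x)y)$ as bilinear expressions in $f,f^{*},g,g^{*},h,h^{*}$. Since $\sigma(x\sigma(y))=\sigma(x)y$, the last of these also equals $f^{*}(x\sigma(y))$, so I can compute $f^{e}(xy)+f^{e}(x\sigma(y))=\frac{1}{2}\big(f(xy)+f^{*}(xy)+f(x\sigma(y))+f^{*}(x\sigma(y))\big)$ and likewise $f^{o}(xy)+f^{o}(x\sigma(y))$ entirely in terms of those four translates. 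Collecting coefficients, the parts depending on the second variable recombine into $g^{e}(y),f^{e}(y),h^{e}(y)$, while the parts depending on the first variable recombine into $f^{e}(x),g^{e}(x),h^{e}(x)$ for the sum and into $f^{o}(x),g^{o}(x),h^{o}(x)$ for the difference; this is exactly (\ref{eq3}) and (\ref{eq2}).

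I do not expect a genuine obstacle here; the only thing requiring care is the bookkeeping with $\sigma$ — above all the identities $f^{*}(x\sigma(y))=f(\sigma(x)y)$ and the analogues for $g$ and $h$, and keeping straight which of the four translates contributes each term — since in the end each of (\ref{eq2}) and (\ref{eq3}) is just a fixed linear combination of those translates.
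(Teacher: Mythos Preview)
Your proposal is correct and follows essentially the same route as the paper: the paper also uses the symmetry of the right-hand side to get $f(x\sigma(y))=f(y\sigma(x))$ and hence $f^{*}(xy)=f(yx)$, derives (2) and (3) exactly as you do, and for (4) applies (\ref{eq1}) to the pairs $(x,y)$, $(x,\sigma(y))$, $(\sigma(x),y)$, $(\sigma(x),\sigma(y))$ --- your four translates --- and then takes the same linear combinations to obtain (\ref{eq2}) and (\ref{eq3}).
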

\begin{proof}
(1) The right hand side of the functional equation
(\ref{eq1}) is invariant under the interchange of $x$ and $y$. So
$f(x\sigma(y))=f(y\sigma(x))$ for all $x,y\in S$. Then $f^{*}(xy)=f(\sigma(x)\sigma(y))=f(y\sigma(\sigma(x)))=f(yx)$ for all $x,y\in S$.\\
(2) From (1) we get that $f^{o}(xy)=f(xy)-f(yx)$ for all $x,y\in S$. Then $f^{o}(xy)=-f^{o}(yx)$ for all $x,y\in S$.\\
(3) Let $x,y,z\in S$ be arbitrary. Using (2) and the associativity of the composition in $S$, we get that $f^{o}(xyz)=f^{o}(x(yz))=-f^{o}(y(zx))=f^{o}(z(xy))=-f^{o}(xyz)$. So, $f^{o}(xyz)=0$ for all $x,y,z\in S$.\\
(4) Let $x,y\in S$ be arbitrary. Applying (\ref{eq1}) to the pairs $(\sigma(x),\sigma(y)),\,(x,\sigma(y))$ and $(\sigma(x),y)$ we obtain
\begin{equation}\label{eq4}f^{*}(x\sigma(y))=f^{*}(x)g^{*}(y)+g^{*}(x)f^{*}(y)+h^{*}(x)h^{*}(y),\end{equation}
\begin{equation}\label{eq5}f(xy)=f(x)g^{*}(y)+g(x)f^{*}(y)+h(x)h^{*}(y)\end{equation}
and
\begin{equation}\label{eq6}f^{*}(xy)=f^{*}(x)g(y)+g^{*}(x)f(y)+h^{*}(x)h(y).\end{equation}
By subtracting (\ref{eq6}) from (\ref{eq5}), and (\ref{eq4}) from (\ref{eq1}) and adding the two obtained identities we get (\ref{eq2}) by a small computation. Similarly, we derive (\ref{eq3}) by adding (\ref{eq1}), (\ref{eq4}), (\ref{eq5}) and (\ref{eq6}). This completes the proof of Lemma \ref{lem0}.
\end{proof}
\begin{lem}\label{lem1} Let $f,g,h:S\to\mathbb{C}$ be a solution of (\ref{eq1}) such that $f\neq0$ and $f^{*}=-f$. Then\\
(1) $g^{*}=-g$ and $h^{*}=-h$.\\
(2) $f(xy)=0$ for all $x,y\in S$ and there exists a constant $\lambda\in\mathbb{C}$ such that $g=-\frac{\lambda^{2}}{2}f$ and $h=\lambda f$.
\end{lem}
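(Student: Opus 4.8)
\emph{Proof proposal.}
Since $f^{*}=-f$ we have $f^{e}=0$ and $f^{o}=f$, so by Lemma \ref{lem0}(3) the function $f$ vanishes on every triple product. The first half of (1) is then immediate: substituting $f^{e}=0$ into equation (\ref{eq3}) of Lemma \ref{lem0}(4) gives $h^{e}(x)h^{e}(y)=0$ for all $x,y\in S$, whence $h^{e}=0$, i.e. $h^{*}=-h$.

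Next I would specialise (\ref{eq1}). Replacing $y$ by $\sigma(y)$ and using $f^{*}=-f$, $h^{*}=-h$ gives
$$f(xy)=f(x)g^{*}(y)-g(x)f(y)-h(x)h(y),$$
while replacing $x$ by $\sigma(x)$ (and using $\sigma(x)\sigma(y)=\sigma(xy)$) gives
$$f(xy)=f(x)g(y)-g^{*}(x)f(y)+h(x)h(y).$$
Adding these and halving eliminates $h$ and yields the symmetrised identity
$$f(xy)=f(x)g^{e}(y)-g^{e}(x)f(y),\qquad x,y\in S.\qquad(\ast)$$

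The key step — and the one I expect to be the main obstacle — is to deduce from $(\ast)$ that $f=0$ on $S^{2}$. Replacing $x$ by $xz$ in $(\ast)$ and using that $f$ vanishes on triple products, one gets $f(xz)\,g^{e}(y)=g^{e}(xz)\,f(y)$ for all $x,y,z\in S$. If $f(ab)\neq 0$ for some $a,b\in S$, this forces $g^{e}=\kappa f$ with $\kappa=g^{e}(ab)/f(ab)$; since $g^{e}$ is even and $f$ is odd we get $\kappa f=(g^{e})^{*}=-\kappa f$, hence $\kappa=0$ (otherwise $f=0$), so $g^{e}=0$, and then $(\ast)$ gives $f(ab)=0$, a contradiction. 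Therefore $f(xy)=0$ for all $x,y\in S$, which is the first assertion of (2).

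Finally, since $x\sigma(y)\in S^{2}$, the left side of (\ref{eq1}) vanishes identically, so
$$h(x)h(y)=-f(x)g(y)-g(x)f(y),\qquad x,y\in S.\qquad(\star)$$
I claim $f$ and $g$ are linearly dependent. If they were independent, then (disposing first of the case $h=0$, where $(\star)$ already makes $g$ a scalar multiple of $f$) we could fix $y_{0}$ with $h(y_{0})\neq 0$ and write $h=\alpha f+\beta g$ for suitable $\alpha,\beta\in\mathbb{C}$; inserting this into $(\star)$ gives
$$\alpha^{2}f(x)f(y)+(\alpha\beta+1)\bigl(f(x)g(y)+g(x)f(y)\bigr)+\beta^{2}g(x)g(y)=0$$
for all $x,y\in S$, and linear independence of $f,g$ forces $\alpha^{2}=0$, $\alpha\beta+1=0$, $\beta^{2}=0$, which is absurd. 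Hence $g=\nu f$ for some $\nu\in\mathbb{C}$ (with $\nu=0$ if $g=0$, using $f\neq 0$), and $(\star)$ becomes $h(x)h(y)=-2\nu f(x)f(y)$. If $h=0$ then $\nu=0$ since $f\neq 0$ and we take $\lambda=0$; otherwise, fixing $y_{0}$ with $h(y_{0})\neq 0$ shows $h=\lambda f$ for some $\lambda\in\mathbb{C}$, and comparing coefficients (again using $f\neq 0$) gives $\lambda^{2}=-2\nu$. In either case $h=\lambda f$ and $g=\nu f=-\frac{1}{2}\lambda^{2}f$; in particular $g^{*}=-g$, which completes (1), and (2) is proved.
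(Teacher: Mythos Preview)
Your proof is correct and follows essentially the same route as the paper's: both first extract $h^{*}=-h$ from (\ref{eq3}), then derive a sine-type relation between $f$ and $g^{e}$ (your $(\ast)$ is the paper's (\ref{eq7}) after replacing $y$ by $\sigma(y)$), use the vanishing of $f$ on triple products to force $g^{e}$ proportional to $f$ and hence $f=0$ on $S^{2}$, and finish with the same bilinear argument on $(\star)$. The only cosmetic differences are that the paper establishes $g^{e}=0$ (hence $g^{*}=-g$) \emph{before} the algebraic endgame, whereas you recover $g^{*}=-g$ at the very end from $g=\nu f$, and that your proof of $f|_{S^{2}}=0$ is phrased as a direct contradiction rather than the paper's linear-dependence argument for $\{f^{o},g^{e}\}$.
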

\begin{proof} From (\ref{eq3}) we deduce that $h^{e}(x)h^{e}(y)=0$ for all $x,y\in S$. Then $h^{e}=0$ which implies that $h^{*}=-h$.\\
Let $x,y\in S$ be arbitrary. Since $f^{*}=-f$ and $h^{*}=-h$ we get from the functional equation (\ref{eq2}) that
\begin{equation}\label{eq8}f(xy)+f(x\sigma(y))=2f(x)g^{e}(y).\end{equation}
Using Lemma \ref{lem0}(2) we have $f(xy)=-f(yx)$ and $f(x\sigma(y))=-f(\sigma(y)x)=f(y\sigma(x))$. So, by writing (\ref{eq8}) for the pair $(y,x)$ and adding the identity obtained to (\ref{eq8}) we derive obtain
\begin{equation}\label{eq7}f^{o}(x\sigma(y))=f^{o}(x)g^{e}(y)+g^{e}(x)f^{o}(y)\end{equation} for all $x,y\in S.$\\
Now, applying (\ref{eq7}) to the pair $(xy,z)$ and using Lemma \ref{lem0}(3) we get that $$f^{o}(xy)g^{e}(z)+g^{e}(xy)f^{o}(z)=0$$ for all $x,y,z\in S.$ Then $\{f^{o},g^{e}\}$ is linearly dependent. Indeed if $\{f^{o},g^{e}\}$ is linearly independent we deduce from the identity above that $f^{o}(xy)=0$ for all $x,y\in S.$  By using (\ref{eq7}) we get that $f^{o}(x)=0$ for all $x\in S$ and then $f=f^{o}=0$, which contradicts the assumption on $f.$ So, there exists a constant $c\in\mathbb{C} $ such that $g^{e}=cf^{o}$ because $f^{o}\neq0$. As $g^{e}$ is even and $f^{o}$ is odd we get that $g^{e}=0.$ Then $f(xy)=f^{o}(xy)=0$ for all $x,y\in S$ and $g^{*}=-g.$\\
Hence, (\ref{eq1}) reduces to
\begin{equation}\label{eq9}f(x)g(y)+g(x)f(y)=-h(x)h(y)\end{equation}
for all $x,y\in S$.
\par If $h\neq0$. Then, we infer from (\ref{eq9}) that there exist constants $\alpha,\beta\in\mathbb{C}$ such that $h=\alpha f+\beta g$. Substituting this in (\ref{eq9}) we obtain after a small computation that $$f(x)\left((1+\alpha\beta)g(y)+\alpha^{2}f(y)\right)+g(x)\left((1+\alpha\beta)f(y)+\beta^{2}g(y)\right)=0$$
for all $x,y\in S.$ So, $f$ and $g$ are linearly dependent. Indeed, if not we deduce from the identity above that $1+\alpha\beta=\alpha^{2}=\beta^{2}=0$, which is a contradiction. As $f\neq0$ there exists a constant $\gamma\in\mathbb{C}$ such that $g=\gamma f$. As $h=\alpha f+\beta g$ we derive that there exists a constant $\lambda\in \mathbb{C}$ such that $h=\lambda f$. So that (\ref{eq9}) implies that $2\gamma f(x)f(y)=-\lambda^{2} f(x)f(y)$ for all $x,y\in S$. As $f\neq0$ we deduce that $\gamma=-\frac{\lambda^{2}}{2}$. Hence $g=-\frac{\lambda^{2}}{2} f$.
\par If $h=0$. Then we get from (\ref{eq9}) that $f(x)g(y)=-g(x)f(y)$ for all $x,y\in S$. Hence $g=0$ because $f\neq0$. So, we get a special case of part (2) corresponding to $\lambda=0$. This completes the proof of Lemma \ref{lem1}.
\end{proof}
\subsection{Links with fundamental functional equations}
\begin{lem}\label{lem2} Let $f,g,h:S\to\mathbb{C}$ be a solution of (\ref{eq1}) such that $f^{e}\neq0$ on $S^{2}$ and there exists a constant $\lambda\in\mathbb{C}$ such that $h^{e}=\lambda f^{e}$. Then there exists a constant $\mu\in\mathbb{C}$ such that
\begin{equation}\label{eq000}g^{o}+\lambda h^{o}=\mu f^{o}.\end{equation} Moreover one of the following possibilities holds.\\
(1) $f^{o}=0$, $g^{o}=-\lambda h^{o}$ and the triple $(f^{e},g^{e}+\frac{\lambda^{2}}{2}f^{e},h^{o})$ satisfies the cosine-sine addition law, i.e.,
\begin{equation}\label{eq-11}f^{e}(xy)=f^{e}(x)\left(g^{e}+\frac{\lambda^{2}}{2}f^{e}\right)(y)+f^{e}(y)\left(g^{e}+\frac{\lambda^{2}}{2}f^{e}\right)(x)-h^{o}(x)h^{o}(y)\end{equation}
for all $x,y\in S.$\\
(2) $g^{e}=-\mu f^{e}$ for some constant $\mu\in\mathbb{C}$,
\begin{equation}\label{eq013}f^{o}(xy)=0,\end{equation}
\begin{equation}\label{eq14}f^{e}(xy)+f^{e}(\sigma(y)x)=2(\lambda^{2}-2\mu)f^{e}(x)f^{e}(y)\end{equation} and
\begin{equation}\label{eq140}\begin{split}&f^{e}(x\sigma(y))=(\lambda^{2}-2\mu)f^{e}(x)f^{e}(y)+2\mu f^{o}(x)f^{o}(y)\\
 &\quad\quad\quad\quad\quad\,\,-\lambda f^{o}(x)h^{o}(y)-\lambda h^{o}(x)f^{o}(y)+h^{o}(x)h^{o}(y)\end{split}\end{equation}
 for all $x,y\in S.$
\end{lem}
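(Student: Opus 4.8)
The plan is to run the whole argument off the two identities of Lemma \ref{lem0}(4), specialized by the hypothesis $h^{e}=\lambda f^{e}$. Under that hypothesis, equation (\ref{eq3}) becomes $f^{e}(xy)+f^{e}(x\sigma(y))=2f^{e}(x)G(y)+2G(x)f^{e}(y)$ with $G:=g^{e}+\frac{\lambda^{2}}{2}f^{e}$, and equation (\ref{eq2}) becomes $f^{o}(xy)+f^{o}(x\sigma(y))=2f^{o}(x)g^{e}(y)+2\Phi(x)f^{e}(y)$ with $\Phi:=g^{o}+\lambda h^{o}$. After (\ref{eq000}) is in hand the latter will read $f^{o}(xy)+f^{o}(x\sigma(y))=2f^{o}(x)k(y)$ with $k:=g^{e}+\mu f^{e}$. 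The entire proof then splits according to whether $f^{o}=0$ or $f^{o}\neq0$ on $S$; the first alternative leads to conclusion (1), the second to conclusion (2).

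To obtain (\ref{eq000}) I would replace $y$ by a product $yz$ in the specialized form of (\ref{eq2}). By Lemma \ref{lem0}(3) both $f^{o}(xyz)$ and $f^{o}(x\sigma(y)\sigma(z))$ vanish, so $f^{o}(x)g^{e}(yz)+\Phi(x)f^{e}(yz)=0$ for all $x,y,z\in S$. Since $f^{e}\neq0$ on $S^{2}$, fixing $w_{0}\in S^{2}$ with $f^{e}(w_{0})\neq0$ and setting $\mu:=-g^{e}(w_{0})/f^{e}(w_{0})$ gives $\Phi=\mu f^{o}$, which is (\ref{eq000}); substituting this back yields the byproduct $f^{o}(x)k(w)=0$ for all $x\in S$ and $w\in S^{2}$.

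If $f^{o}=0$, then $\Phi=0$, i.e. $g^{o}=-\lambda h^{o}$; substituting $f=f^{e}$, $g=g^{e}-\lambda h^{o}$, $h=\lambda f^{e}+h^{o}$ into (\ref{eq1}), the mixed terms $\pm\lambda f^{e}(x)h^{o}(y)$ and $\pm\lambda h^{o}(x)f^{e}(y)$ cancel, leaving $f^{e}(x\sigma(y))=f^{e}(x)G(y)+G(x)f^{e}(y)+h^{o}(x)h^{o}(y)$; replacing $y$ by $\sigma(y)$ and using that $f^{e}$ and $G$ are even while $h^{o}$ is odd produces (\ref{eq-11}), i.e. case (1). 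If instead $f^{o}\neq0$ on $S$, the byproduct of the previous paragraph already gives $k=0$ on $S^{2}$. Symmetrizing the identity $f^{o}(xy)+f^{o}(x\sigma(y))=2f^{o}(x)k(y)$ — swap $x$ and $y$ and use $f^{o}(yx)=-f^{o}(xy)$ together with $f^{o}(y\sigma(x))=f^{o}(x\sigma(y))$, both consequences of Lemma \ref{lem0}(2) and $f^{o}\circ\sigma=-f^{o}$ — yields the pair $f^{o}(xy)=f^{o}(x)k(y)-k(x)f^{o}(y)$ and $f^{o}(x\sigma(y))=f^{o}(x)k(y)+k(x)f^{o}(y)$. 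Putting $x=uv$ in the first of these, and using Lemma \ref{lem0}(3) and $k|_{S^{2}}=0$, forces $f^{o}(uv)k(y)=0$ for all $u,v,y$; were $f^{o}\neq0$ on $S^{2}$ this would give $k\equiv0$, hence $f^{o}=0$ on $S^{2}$, a contradiction — so $f^{o}=0$ on $S^{2}$, which is (\ref{eq013}). Plugging this into $f^{o}(xy)+f^{o}(x\sigma(y))=2f^{o}(x)k(y)$ makes the left side vanish, so $f^{o}(x)k(y)=0$, and since $f^{o}\neq0$ on $S$ we get $k\equiv0$ on all of $S$, i.e. $g^{e}=-\mu f^{e}$. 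Then (\ref{eq14}) follows from the specialized (\ref{eq3}) with $g^{e}=-\mu f^{e}$ and the centrality of $f^{e}$ (Lemma \ref{lem0}(1)), and (\ref{eq140}) follows by substituting $f=f^{e}+f^{o}$, $g=-\mu f^{e}+\mu f^{o}-\lambda h^{o}$, $h=\lambda f^{e}+h^{o}$ into (\ref{eq1}) — legitimate since $f=f^{e}$ on $S^{2}$ — and simplifying.

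The main obstacle I anticipate is promoting $k=0$ from $S^{2}$ to all of $S$ in case (2), i.e. getting $g^{e}=-\mu f^{e}$ globally and not merely modulo the prime ideal where $f^{e}$ may vanish. This is not visible directly from the functional equation: it forces one first to establish that $f^{o}$ vanishes on $S^{2}$, itself the delicate point, which rests on the triple-product identity $f^{o}(xyz)=0$ and the symmetrization above, and only afterwards to re-enter the identity for $f^{o}(xy)+f^{o}(x\sigma(y))$. Throughout, the care lies in keeping track of which identities are known on $S$ and which only on $S^{2}$.
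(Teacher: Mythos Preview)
Your proof is correct and shares the paper's overall architecture: specialize (\ref{eq2}) and (\ref{eq3}) under $h^{e}=\lambda f^{e}$, feed in Lemma~\ref{lem0}(3) to extract (\ref{eq000}), then split cases. Case~(1) ($f^{o}=0$) is handled the same way in both.

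The genuine difference is in case~(2). The paper reaches the dichotomy $f^{o}=0$ or $g^{e}=-\mu f^{e}$ directly on all of $S$ (by applying (\ref{eq03}) at $(xy,z)$ and then feeding back into (\ref{eq03})), and in the branch $g^{e}=-\mu f^{e}$ it obtains (\ref{eq013}) by a centrality trick: adding the $f^{o}$- and $f^{e}$-identities shows $f(xy)+f(x\sigma(y))$ is symmetric in $x,y$, so $f$ is central, whence $f^{o}=0$ on $S^{2}$ via Lemma~\ref{lem0}(2). You instead split on $f^{o}\neq0$, use the byproduct $k|_{S^{2}}=0$, symmetrize to a sine-subtraction law $f^{o}(xy)=f^{o}(x)k(y)-k(x)f^{o}(y)$, and bootstrap: a short contradiction gives $f^{o}=0$ on $S^{2}$ first, and only then $k\equiv0$ on all of $S$. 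Your route is a touch more hands-on but avoids the centrality observation entirely; the paper's is a one-line structural remark once you see it. Both deliver (\ref{eq013}), (\ref{eq14}) and (\ref{eq140}) with the same final substitutions.
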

\begin{proof}
Since $h^{e}=\lambda f^{e}$ the functional equation (\ref{eq2}) reduces to
\begin{equation}\label{eq02}f^{o}(xy)+f^{o}(x\sigma(y))=2f^{o}(x)g^{e}(y)+2\left(g^{o}+\lambda h^{o}\right)(x)f^{e}(y),\,\,x,y\in S.\end{equation}
As $f^{o}(xyz)=$ for all $x,y,z\in S$ we deduce from (\ref{eq02}) that $$f^{o}(x)g^{e}(y)+\left(g^{o}+\lambda h^{o}\right)(x)f^{e}(y)=0$$ for all $x\in S,\, y\in S^{2}$. Since $f^{e}\neq0$ on $S^{2}$ we get from the identity above that there exists a constant $\mu\in\mathbb{C}$ such that $g^{o}+\lambda h^{o}=\mu f^{o}$, which is (\ref{eq000}). Hence, (\ref{eq02}) becomes
\begin{equation}\label{eq03}f^{o}(xy)+f^{o}(x\sigma(y))=2f^{o}(x)(g^{e}(y)+\mu f^{e}(y)),\,\,x,y\in S.\end{equation}
Then, in view of Lemma \ref{lem0}(3), we get that $f^{o}(xy)(g^{e}(z)+\mu f^{e}(z))=0$ for all $x,y,z\in S$. So that $f^{o}=0$ on $S^{2}$ or $g^{e}=-\mu f^{e}$. Hence, taking (\ref{eq03}) into account, we deduce that $f^{o}(x)(g^{e}(y)+\mu f^{e}(y))=0$ for all $x,y\in S$. So, we discuss according to whether $f^{o}=0$ or $g^{e}=-\mu f^{e}$.\\
\underline{Case A}: $f^{o}=0$. Then $f=f^{e}$ and in view of (\ref{eq000}) we get that $g^{o}=-\lambda h^{o}$. So, by applying (\ref{eq1}) to the pair $(x,\sigma(y))$ and subtracting (\ref{eq1}) from the identity obtained, using that $h^{e}=\lambda f^{e}$, a small computation shows that
\begin{equation}\label{eq10}f(xy)-f(x\sigma(y))=-2h^{o}(x)h^{o}(y)\end{equation} for all $x,y\in S.$
When we add this to (\ref{eq3}) the result leads to (\ref{eq-11}). The result occurs in part (1).\\
\underline{Case B}: $g^{e}=-\mu f^{e}$. So, from (\ref{eq03}) we get
\begin{equation*}f^{o}(xy)+f^{o}(x\sigma(y))=0,\,\,x,y\in S.\end{equation*} By using (\ref{eq3}) and that $g^{e}=-\mu f^{e}$ and $h^{e}=\lambda f^{e}$ we obtain (\ref{eq14}).
By adding the identity above and  (\ref{eq14}) we get that
\begin{equation}\label{eq014}f(xy)+f(x\sigma(y))=2(\lambda^{2}-2\mu)f^{e}(x)f^{e}(y)\end{equation} for all $x,y\in S.$ The right hands of (\ref{eq1}) and (\ref{eq014}) are invariant under the interchange of $x$ and $y$. So $f(xy)=f(yx)$ for all $x,y\in S$, i.e., $f$ is central. Then, taking Lemma \ref{lem0}(1)(2) into account, we get (\ref{eq013}).\\
On the other hand, using (\ref{eq1}) and that any complex-valued function on $S$ is the sum of its even and odd parts we deduce
\begin{equation*}\begin{split}&f(x\sigma(y))=f^{e}(x)g^{e}(y)+g^{e}(x)f^{e}(y)+h^{e}(x)h^{e}(y)\\
 &\quad\quad\quad\quad\quad+f^{o}(x)g^{o}(y)+g^{o}(x)f^{o}(y)+h^{o}(x)h^{o}(y)\\
 &\quad\quad\quad\quad\quad+f^{e}(x)g^{o}(y)+g^{e}(x)f^{o}(y)+h^{e}(x)h^{o}(y)\\
 &\quad\quad\quad\quad\quad+f^{o}(x)g^{e}(y)+g^{o}(x)f^{e}(y)+h^{o}(x)h^{e}(y)\end{split}\end{equation*}
 for all $x,y\in S.$ Now, using that $g^{e}=-\mu f^{e}$, $h^{e}=\lambda f^{e}$ and $g^{o}+\lambda h^{o}=\mu f^{o}$ the computation above reduces to (\ref{eq140}). This completes the proof of Lemma \ref{lem2}.
\end{proof}
\begin{lem}\label{lem3} Let $f,g,h:S\to\mathbb{C}$ be a solution of (\ref{eq1}) such that $f^{e}\neq0$, $f^{e}=0$ on $S^{2}$ and there exists a constant $\lambda\in\mathbb{C}$ such that $h^{e}=\lambda f^{e}$. Then \\
(1) $g^{e}=-\frac{\lambda^{2}}{2} f^{e}$.\\
(2)
\begin{equation}\label{eq15}f^{o}(x\sigma(y))=k(x)f^{e}(y)+f^{e}(x)k(y),\,\,x,y\in S,\end{equation}
where $k:=-\frac{\lambda^{2}}{2} f^{o}+g^{o}+\lambda h^{o}.$\\
(3)
\begin{equation}\label{eq16}f^{o}(x)g^{o}(y)+g^{o}(x)f^{o}(y)+h^{o}(x)h^{o}(y)=0,\,\,x,y\in S.\end{equation}
\end{lem}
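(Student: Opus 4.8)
The plan is to establish the three assertions essentially independently, in each case exploiting that $f^{e}$ vanishes on $S^{2}$ together with the hypothesis $h^{e}=\lambda f^{e}$. For part (1) I would start from identity (\ref{eq3}). Since $xy\in S^{2}$ and $x\sigma(y)\in S^{2}$, the left-hand side of (\ref{eq3}) is identically zero, so substituting $h^{e}=\lambda f^{e}$ reduces (\ref{eq3}) to $f^{e}(x)g^{e}(y)+g^{e}(x)f^{e}(y)+\lambda^{2}f^{e}(x)f^{e}(y)=0$ for all $x,y\in S$. Putting $G:=g^{e}+\frac{\lambda^{2}}{2}f^{e}$, a short rearrangement turns this into $f^{e}(x)G(y)+G(x)f^{e}(y)=0$. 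As $f^{e}\neq 0$, fix $x_{0}$ with $f^{e}(x_{0})\neq 0$; then $G=cf^{e}$ with $c:=-G(x_{0})/f^{e}(x_{0})$, and substituting back gives $2c\,f^{e}(x)f^{e}(y)=0$, whence $c=0$ and $G=0$, that is $g^{e}=-\frac{\lambda^{2}}{2}f^{e}$.

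For part (2) I would use the odd analogue (\ref{eq2}). Inserting $h^{e}=\lambda f^{e}$ and the relation $g^{e}=-\frac{\lambda^{2}}{2}f^{e}$ obtained in part (1), (\ref{eq2}) collapses to $f^{o}(xy)+f^{o}(x\sigma(y))=2k(x)f^{e}(y)$ with $k$ as in the statement. To eliminate the term $f^{o}(xy)$, recall from Lemma \ref{lem0}(2) that $f^{o}(yx)=-f^{o}(xy)$, and observe that the oddness of $f^{o}$, the identity $\sigma(x\sigma(y))=\sigma(x)y$, and Lemma \ref{lem0}(2) together give $f^{o}(x\sigma(y))=f^{o}(y\sigma(x))$. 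Interchanging $x$ and $y$ in the collapsed identity therefore yields $-f^{o}(xy)+f^{o}(x\sigma(y))=2k(y)f^{e}(x)$; adding the two identities gives $2f^{o}(x\sigma(y))=2k(x)f^{e}(y)+2k(y)f^{e}(x)$, which is (\ref{eq15}).

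For part (3) I would apply (\ref{eq1}) to the pairs $(x,y)$, $(\sigma(x),\sigma(y))$, $(x,\sigma(y))$, $(\sigma(x),y)$ — equivalently combine (\ref{eq1}) with the identities (\ref{eq4}), (\ref{eq5}), (\ref{eq6}) from the proof of Lemma \ref{lem0} — and form the combination $(\ref{eq1})+(\ref{eq4})-(\ref{eq5})-(\ref{eq6})$. Since $\sigma$ is an automorphism this simplifies to $f^{e}(x\sigma(y))-f^{e}(xy)=2\big(f^{o}(x)g^{o}(y)+g^{o}(x)f^{o}(y)+h^{o}(x)h^{o}(y)\big)$, which is just the component of the bilinear identity (\ref{eq1}) that is odd in both $x$ and $y$. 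As $xy$ and $x\sigma(y)$ lie in $S^{2}$, the left-hand side vanishes, and (\ref{eq16}) follows.

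All the arithmetic here is routine bookkeeping with even and odd parts; the one step calling for a little care is the symmetrization in part (2), namely verifying the identity $f^{o}(x\sigma(y))=f^{o}(y\sigma(x))$ and checking that the two interchanged copies of the collapsed equation add up precisely to (\ref{eq15}). The hypotheses $f^{e}\neq 0$ and $f^{e}=0$ on $S^{2}$ enter exactly where indicated: the first to divide by $f^{e}(x_{0})$ in part (1), the second to annihilate the left-hand sides of (\ref{eq3}) in part (1) and of the combination used in part (3).
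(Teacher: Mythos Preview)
Your arguments for parts (1) and (2) coincide with the paper's: both start from (\ref{eq3}) and (\ref{eq2}) respectively, substitute $h^{e}=\lambda f^{e}$ and (for part (2)) the relation from part (1), and then symmetrize exactly as you describe. Your treatment of part (1) is slightly more explicit in showing $G=cf^{e}$ and then $c=0$, but the content is identical.

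For part (3), however, your route is genuinely different from the paper's and cleaner. The paper uses the already-established identity (\ref{eq15}) from part (2): since $f^{e}=0$ on $S^{2}$ one has $f(x\sigma(y))=f^{o}(x\sigma(y))$, so the right-hand side of (\ref{eq1}) equals $k(x)f^{e}(y)+f^{e}(x)k(y)$; the paper then expands both sides fully in even and odd parts, substitutes $g^{e}=-\tfrac{\lambda^{2}}{2}f^{e}$, $h^{e}=\lambda f^{e}$ and the definition of $k$, and cancels a page of terms to reach (\ref{eq16}). Your approach instead isolates directly the ``odd-in-both-variables'' component of (\ref{eq1}) via the combination $(\ref{eq1})+(\ref{eq4})-(\ref{eq5})-(\ref{eq6})$, obtaining $f^{e}(x\sigma(y))-f^{e}(xy)=2\bigl(f^{o}(x)g^{o}(y)+g^{o}(x)f^{o}(y)+h^{o}(x)h^{o}(y)\bigr)$, whose left-hand side vanishes by hypothesis. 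This is correct, avoids the long expansion, and is logically independent of parts (1) and (2); in fact it uses neither $h^{e}=\lambda f^{e}$ nor $g^{e}=-\tfrac{\lambda^{2}}{2}f^{e}$, only $f^{e}=0$ on $S^{2}$. The paper's approach, by contrast, ties the three parts together and makes explicit how (\ref{eq16}) emerges from comparing (\ref{eq1}) with (\ref{eq15}).
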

\begin{proof}(1) Let $x,y\in S$ be arbitrary. Since $f^{e}=0$ on $S^{2}$  and $h^{e}=\lambda f^{e}$ we get from (\ref{eq3}) that $f^{e}(x)\left(g^{e}(y)+\frac{\lambda^{2}}{2} f^{e}(y)\right)=-f^{e}(y)\left(g^{e}(x)+\frac{\lambda^{2}}{2} f^{e}(x)\right)$. So, $x,y\in S$ being arbitrary and $f^{e}\neq0$, we deduce that $g^{e}=-\frac{\lambda^{2}}{2} f^{e}$. This is part (1).\\
(2) Using that $h^{e}=\lambda f^{e}$ and part(1) the functional equation (\ref{eq2}) becomes $$f^{o}(xy)+f^{o}(x\sigma(y))=2f^{o}(x)\left(-\frac{\lambda^{2}}{2} f^{e}(y)\right)+2g^{o}(x)f^{e}(y)+2\lambda h^{o}(x)f^{e}(y)$$ for all $x,y\in S$, which implies that
\begin{equation}\label{eq160}f^{o}(xy)+f^{o}(x\sigma(y))=2k(x)f^{e}(y)\end{equation} for all $x,y\in S$ where $k:=-\frac{\lambda^{2}}{2} f^{o}+g^{o}+\lambda h^{o}.$
Now, applying (\ref{eq160}) to the pair $(y,x)$, using Lemma \ref{lem0}(2) and that $f^{o}$ is odd we obtain $$-f^{o}(xy)+f^{o}(x\sigma(y))=2k(y)f^{e}(x).$$ So, adding this to (\ref{eq160}) we get part (2).\\
(3) Since $f^{e}=0$ on $S^{2}$ we have $f(x\sigma(y))=f^{o}(x\sigma(y))$ for all $x,y\in S$. So, from (\ref{eq1}) and (\ref{eq15}) we deduce that $$f(x)g(y)+g(x)f(y)+h(x)h(y)=k(x)f^{e}(y)+f^{e}(x)k(y)$$ for all $x,y\in S.$ As $f=f^{e}+f^{o}$, $g=g^{e}+g^{o}=-\frac{\lambda^{2}}{2} f^{e}+g^{o}$, $h=h^{e}+h^{o}=\lambda f^{e}+h^{o}$ and $k=-\frac{\lambda^{2}}{2} f^{o}+g^{o}+\lambda h^{o}$ we derive from the identity above that
\begin{equation*}\begin{split}&-\frac{\lambda^{2}}{2} f^{e}(x)f^{e}(y)+f^{e}(x)g^{o}(y)-\frac{\lambda^{2}}{2} f^{o}(x)f^{e}(y)+f^{o}(x)f^{o}(y)\\
&\quad\quad-\frac{\lambda^{2}}{2} f^{e}(x)f^{e}(y)-\frac{\lambda^{2}}{2} f^{e}(x)f^{o}(y)+g^{o}(x)f^{e}(y)+g^{o}(x)f^{o}(y)\\
&\quad\quad+\lambda^{2}f^{e}(x)f^{e}(y)+\lambda f^{e}(x)h^{o}(y)+\lambda h^{o}(x)f^{e}(y)+h^{o}(x)h^{o}(y)\\
&=-\frac{\lambda^{2}}{2} f^{o}(x)f^{e}(y)+g^{o}(x)f^{e}(y)+\lambda h^{o}(x)f^{e}(y)-\frac{\lambda^{2}}{2} f^{o}(y)f^{e}(x)\\
&\quad\quad+g^{o}(y)f^{e}(x)+\lambda h^{o}(y)f^{e}(x)\end{split}\end{equation*} for all $x,y\in S$, which implies that
$f^{o}(x)g^{o}(y)+g^{o}(x)f^{o}(y)+h^{o}(x)h^{o}(y)=0$
for all $x,y\in S.$ The result occurs in part (3). This completes the proof of Lemma \ref{lem3}.
\end{proof}
In Lemma \ref{lem4} we establish, when $\{f^{e},h^{e}\}$ is linearly independent, a link between the cosine-sine functional equation and the functional equation (\ref{eq1}) and generalized version of the multiplicative Cauchy equation .
\begin{lem}\label{lem4} Let $f,g,h:S\to\mathbb{C}$ be a solution of (\ref{eq1}) with $f^{e}$ and $h^{e}$ independent. Then one of the following possibilities holds.\\
(1) $f$, $g$ and $h$ are even functions such that $\{f,g,h\}$ is linearly independent and $(f,g,h)$ satisfies the cosine-sine addition law
\begin{equation}\label{eq17}f(xy)=f(x)g(y)+g(x)f(y)+h(x)h(y),\,\,x,y\in S.\end{equation}
(2) $\{f^{e},g^{e},h^{e}\}$ is linearly dependent and $(f^{e},g^{e},h^{e})$ satisfies the cosine-sine addition law
\begin{equation}\label{eq--18}f^{e}(xy)=f^{e}(x)g^{e}(y)+g^{e}(x)f^{e}(y)+h^{e}(x)h^{e}(y),\,\,x,y\in S,\end{equation}
$f^{o}(xy)=0$ for all $x,y\in S$, $g^{e}=\frac{\beta^{2}}{2}f^{e}+\beta h^{e}$, $g^{o}=-\frac{\beta^{2}}{2}f^{o}$ and $h^{o}=-\beta f^{o}$ for some constant $\beta\in\mathbb{C}.$\\
(3) There exists a constant $\beta\in\mathbb{C}$ such that $\beta f^{o}+h^{o}\neq0$, $g=\frac{\beta^{2}}{2}f+\beta h$ and the pair $(f,l)$ satisfies the functional equation
\begin{equation}\label{eq+17}f(x\sigma(y))=l(x)l(y),\,\,x,y\in S,\end{equation}
where $l:=\beta f+h.$
\end{lem}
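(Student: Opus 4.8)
The plan is to decompose the functional equation into its even and odd components and then branch according to whether $\{f^{e},g^{e},h^{e}\}$ is linearly dependent. First I would complement (\ref{eq2})--(\ref{eq3}) by the identities obtained from (\ref{eq1}), (\ref{eq4}), (\ref{eq5}), (\ref{eq6}) on separating the even and odd parts of $f,g,h$, in particular
\[f^{e}(xy)=f^{e}(x)g^{e}(y)+g^{e}(x)f^{e}(y)+h^{e}(x)h^{e}(y)-\big(f^{o}(x)g^{o}(y)+g^{o}(x)f^{o}(y)+h^{o}(x)h^{o}(y)\big)\]
and
\[f^{o}(xy)=f^{o}(x)g^{e}(y)+g^{o}(x)f^{e}(y)+h^{o}(x)h^{e}(y)-\big(f^{e}(x)g^{o}(y)+g^{e}(x)f^{o}(y)+h^{e}(x)h^{o}(y)\big),\]
together with the companion formula for $f^{e}(x\sigma(y))$. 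Applying (\ref{eq2}) to the pair $(xy,z)$ and using that $f^{o}$ vanishes on $S^{3}$ (Lemma \ref{lem0}(3)) then yields the basic relation
\begin{equation*}f^{o}(xy)\,g^{e}(z)+g^{o}(xy)\,f^{e}(z)+h^{o}(xy)\,h^{e}(z)=0,\qquad x,y,z\in S.\tag{$\ast$}\end{equation*}

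Suppose first that $\{f^{e},g^{e},h^{e}\}$ is linearly independent. Then $(\ast)$ forces $f^{o},g^{o},h^{o}$ to vanish on $S^{2}$. Substituting this into the identity for $f^{o}(xy)$ above (whose left-hand side is now $0$) gives
\[f^{o}(x)g^{e}(y)+g^{o}(x)f^{e}(y)+h^{o}(x)h^{e}(y)=f^{e}(x)g^{o}(y)+g^{e}(x)f^{o}(y)+h^{e}(x)h^{o}(y);\]
for each fixed $x$ the left-hand side is an even function of $y$ while the right-hand side changes sign under $y\mapsto\sigma(y)$, so both sides vanish identically, and the linear independence of $f^{e},g^{e},h^{e}$ yields $f^{o}=g^{o}=h^{o}=0$. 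Thus $f,g,h$ are even, replacing $y$ by $\sigma(y)$ in (\ref{eq1}) turns it into the cosine-sine addition law, and $\{f,g,h\}=\{f^{e},g^{e},h^{e}\}$ is independent: this is possibility (1).

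Now suppose $\{f^{e},g^{e},h^{e}\}$ is linearly dependent. Since $f^{e}$ and $h^{e}$ are independent we may write $g^{e}=af^{e}+bh^{e}$, and $(\ast)$ then gives $g^{o}(xy)=-af^{o}(xy)$, $h^{o}(xy)=-bf^{o}(xy)$. Putting $u:=af^{o}+g^{o}$, $v:=bf^{o}+h^{o}$ (so $u,v$ vanish on $S^{2}$), the identity for $f^{o}(xy)$ becomes $f^{o}(xy)=u(x)f^{e}(y)+v(x)h^{e}(y)-f^{e}(x)u(y)-h^{e}(x)v(y)$, and evaluating this on $(x,yz)$ with $f^{o}|_{S^{3}}=0$ produces $u(x)f^{e}(w)+v(x)h^{e}(w)=0$ for all $x\in S$, $w\in S^{2}$. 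A short sub-case analysis, according to whether $f^{e},h^{e}$ stay independent upon restriction to $S^{2}$, then determines $g^{o},h^{o}$ in terms of $f^{o}$ and forces $a=\frac{b^{2}}{2}$; putting $\beta:=b$ and $l:=\beta f+h$, equation (\ref{eq1}) collapses to $f(x\sigma(y))=l(x)l(y)$. If $l^{o}=\beta f^{o}+h^{o}\neq0$ we land in possibility (3). If $l^{o}=0$ then $l$ is even, and replacing $y$ by $\sigma(y)$ gives $f(xy)=l^{e}(x)l^{e}(y)$, whence $f^{o}(xy)=0$ and, expanding $l^{e}=\beta f^{e}+h^{e}$, the triple $(f^{e},g^{e},h^{e})$ satisfies the cosine-sine addition law while $g^{o}=-\frac{\beta^{2}}{2}f^{o}$ and $h^{o}=-\beta f^{o}$; this is possibility (2).

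The hard part will be this last case: extracting the exact relations $g=\frac{\beta^{2}}{2}f+\beta h$ — and in particular the value $a=b^{2}/2$ — from the weak relations satisfied by the odd parts. I expect the heaviest bookkeeping in the degenerate configuration where $f^{e}$ and $h^{e}$ become linearly dependent after restriction to $S^{2}$ although independent on $S$; there one presumably first establishes that $(f^{e},g^{e},h^{e})$ satisfies the cosine-sine addition law and then feeds this into Proposition \ref{prop1} to conclude.
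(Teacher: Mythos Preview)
Your decomposition identities for $f^{e}(xy)$ and $f^{o}(xy)$ are correct and give a clean start; the treatment of the case $\{f^{e},g^{e},h^{e}\}$ independent is essentially the paper's Case~A. The structure of the dependent case is also right: write $g^{e}=af^{e}+bh^{e}$, set $u=af^{o}+g^{o}$, $v=bf^{o}+h^{o}$, and aim for $a=b^{2}/2$ together with the dichotomy $v\neq 0$ (possibility~(3)) versus $v=0$ (possibility~(2)).

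The genuine gap is the sentence ``A short sub-case analysis \ldots\ determines $g^{o},h^{o}$ in terms of $f^{o}$ and forces $a=\tfrac{b^{2}}{2}$.'' This is precisely where the paper invests most of its effort, and neither your proposed splitting (on whether $f^{e},h^{e}$ remain independent on $S^{2}$) nor an appeal to Proposition~\ref{prop1} will get you there. Concretely: the paper first uses (\ref{eq20}) to show $f^{e}\neq 0$ on $S^{2}$, deduces $u=\lambda v$ for some constant, and then splits on $v=0$ versus $v\neq 0$. When $v=0$ one has $g^{o}=-af^{o}$, $h^{o}=-bf^{o}$, but $a=b^{2}/2$ does \emph{not} follow from your relation $u(x)f^{e}(w)+v(x)h^{e}(w)=0$; the paper instead proves $f^{o}(xy)=0$, rewrites $f^{e}(xy)$ as $(2a-b^{2})f^{e}(x)f^{e}(y)+H(x)H(y)-(b^{2}-2a)f^{o}(x)f^{o}(y)$ with $H=bf^{e}+h^{e}$, and then exploits associativity and the centrality of $f^{e}$ to force $(2a-b^{2})f^{o}=0$. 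When $v\neq 0$ the argument is different again: one first shows $h^{e}$ and $g^{e}$ are central (from your relation with $v\neq 0$), then computes $f^{e}(xyz)-f^{e}(xzy)$ two ways to obtain an identity of the form $\phi(x)\psi_{y}(z)=-\phi(z)\psi_{y}(x)$ with $\phi=af-g+bh$, and concludes $\phi=0$, i.e.\ $g=af+bh$ as \emph{functions}, not merely their even parts; only then does $2a=b^{2}$ drop out from $(2a-b^{2})f^{o}(xy)=0$ and $f^{o}\neq 0$ on $S^{2}$.

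So your outline is sound, but the step you flag as ``hard'' needs these two substantive arguments rather than a bookkeeping sub-case analysis, and Proposition~\ref{prop1} plays no role here.
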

\begin{proof} $\{f^{e},h^{e}\}$ is linearly independent. So we split the discussion according to $\{f^{e},g^{e},h^{e}\}$ is linearly independent or not.\\
\underline{Case A}: $\{f^{e},g^{e},h^{e}\}$ is linearly independent. Let $x,y,z\in S$ be arbitrary. By  applying (\ref{eq2}) to the pair $(xy,z)$, and using Lemma \ref{lem0}(3), we get that $f^{o}(xy)g^{e}(z)+g^{o}(xy)f^{e}(z)+h^{o}(xy)h^{e}(z)=0.$
So, $x,y,z\in S$ being arbitrary and $\{f^{e},g^{e},h^{e}\}$ linearly independent, we derive that $f^{o}(xy)=0$ for all $x,y\in S.$ Hence, we get from (\ref{eq2}) that $f^{o}(x)g^{e}(y)+g^{o}(x)f^{e}(y)+h^{o}(x)h^{e}(y)=0$ for all $x,y\in S$. So, by using that $\{f^{e},g^{e},h^{e}\}$ is linearly independent, we deduce that $f^{o}(x)=g^{o}(x)=h^{o}(x)=0$ for all $x\in S.$ Then $f$, $g$ and $h$ are even functions. So that, by applying (\ref{eq1}) to the pair $(x,\sigma(y))$ we obtain (\ref{eq17}). Hence, we get part (1).\\
\underline{Case B}: $\{f^{e},g^{e},h^{e}\}$ is linearly dependent. Then there exist $\alpha,\beta\in\mathbb{C}$ such that
\begin{equation}\label{eq-18}g^{e}=\alpha f^{e}+\beta h^{e}.\end{equation} So, from (\ref{eq2}) and (\ref{eq3}) we derive respectively that
\begin{equation}\label{eq18}f^{o}(xy)+f^{o}(x\sigma(y))=2F_{1}(x)f^{e}(y)+2F_{2}(x)h^{e}(y),\,\,x,y\in S,\end{equation}
where
\begin{equation}\label{eq19}F_{1}:=\alpha f^{o}+g^{o}\,\,\text{and}\,\,F_{2}:=\beta f^{o}+h^{o}\end{equation}
and
\begin{equation}\label{eq20}\begin{split}&f^{e}(xy)+f^{e}(x\sigma(y))\\
&\quad\quad\quad\quad\quad\quad=2\left((2\alpha-\beta^{2})f^{e}(x)f^{e}(y)+(2\beta f^{e}+h^{e})(x)(2\beta f^{e}+h^{e})(y)\right)\end{split}\end{equation}
for all $x,y\in S.$\\
Let $x,y,z\in S$ be arbitrary. By applying (\ref{eq18}) to each of the pairs $(xy,z)$, and $(x,yz)$ and taking Lemma \ref{lem0}(3) into account, we get that
\begin{equation}\label{eq21}F_{1}(xy)f^{e}(z)+F_{2}(xy)h^{e}(z)=0\end{equation}
and
\begin{equation}\label{eq22}F_{1}(x)f^{e}(yz)+F_{2}(x)h^{e}(yz)=0.\end{equation}
So, $x,y,z\in S$ being arbitrary and $\{f^{e},h^{e}\}$ linearly independent, we infer from (\ref{eq21}) that $F_{1}(xy)=F_{2}(xy)=0$ for all $x,y\in S$. Then, in view of (\ref{eq19}) we obtain
\begin{equation}\label{eq23}g^{o}(xy)=-\alpha f^{o}(xy)\,\,\text{and}\,\,h^{o}(xy)=-\beta f^{o}(xy)\end{equation}
for all $x,y\in S$.\\
On the other hand, since $\{f^{e},h^{e}\}$ is linearly independent so is the pair $\{f^{e},2\beta f^{e}+h^{e}\}$. Then, if $f^{e}(xy)=0$ for all $x,y\in S$, we deduce by applying (\ref{eq20}), that $2\beta f^{e}+h^{e}=0$, which contradicts that $\{f^{e},h^{e}\}$ is linearly independent. So, $f^{e}\neq0$ on $S^{2}$. Hence, we deduce from (\ref{eq22}) that
\begin{equation}\label{eq24}F_{1}=\lambda F_{2}\end{equation}
for some constant $\lambda\in\mathbb{C}.$ We split the discussion into the subcases $F_{2}=0$ and $F_{2}\neq0$.\\
\underline{Subcase B1}: $F_{2}=0$. Then, in view of (\ref{eq24}) and (\ref{eq19}) we get that
\begin{equation}\label{eq25}g^{o}=-\alpha f^{o}\,\,\text{and}\,\,h^{o}=-\beta f^{o}.\end{equation}
Moveover, the identity (\ref{eq18}) implies that $$f^{o}(xy)+f^{o}(x\sigma(y))=0\,\,\text{and}\,\,f^{o}(yx)+f^{o}(y\sigma(x))=0$$ for all $x,y\in S.$
So, using Lemma \ref{lem0}(2) and that $f^{o}\circ\sigma=-f^{o},$ and adding the two identities above we obtain
\begin{equation}\label{eq26}f^{o}(xy)=0\end{equation}
for all $x,y\in S,$ i.e., $f^{o}=0$ on $S^{2}$. Hence, $f(x\sigma(y))=f^{e}(x\sigma(y))$ for all $x,y\in S$ and we get from the functional equation (\ref{eq1}) that
\begin{equation*}\begin{split}&f^{e}(x\sigma(y))\\
&=f(x)g(y)+g(x)f(y)+h(x)h(y)\\
&=\left(f^{e}(x)+f^{o}(x)\right)\left(g^{e}(y)+g^{o}(y)\right)+\left(g^{e}(x)+g^{o}(x)\right)\left(f^{e}(y)+f^{o}(y)\right)\\
&\quad+\left(h^{e}(x)+h^{o}(x)\right)\left(h^{e}(y)+h^{o}(y)\right)\end{split}\end{equation*}
for all $x,y\in S,$ which implies, by using (\ref{eq25}), that
\begin{equation*}\begin{split}&f^{e}(x\sigma(y))=f^{e}(x)g^{e}(y)+f^{e}(y)g^{e}(x)+f^{o}(y)\left(g^{e}(x)-\alpha f^{e}(x)-\beta h^{e}(x)\right)\\
&\quad\quad\quad\quad\quad\,+f^{o}(x)\left(g^{e}(y)-\alpha f^{e}(y)-\beta h^{e}(y)\right)+h^{e}(x)h^{e}(y)\\
&\quad\quad\quad\quad\quad\,+(\beta^{2}-2\alpha)f^{o}(x)f^{o}(y)\end{split}\end{equation*}
for all $x,y\in S.$\\
Now, (\ref{eq-18}) applied to the pair $(x,\sigma(y))$ leads to
\begin{equation}\label{eq27}f^{e}(xy)=(2\alpha-\beta^{2})f^{e}(x)f^{e}(y)+H(x)H(y)-(\beta^{2}-2\alpha)f^{o}(x)f^{o}(y)\end{equation}
for all $x,y\in S,$ where $H:=\beta f^{e}+h^{e}.$\\
Let $x,y,z\in S$ be arbitrary. By applying the identity above to the pairs $(xy,z)$ and $(zx,y)$, using the associativity of the composition in $S$, that $f^{e}$ is central and $f^{o}=0$ on $S^{2}$, comparison of the results leads to $$(2\alpha-\beta^{2})f^{e}(xy)f^{e}(z)+H(xy)H(z)=(2\alpha-\beta^{2})f^{e}(xz)f^{e}(y)+H(xz)H(y)$$
for all $x,y,z\in S.$\\
 Since $\{f^{e},h^{e}\}$ is linearly independent so is $\{f^{e},H\}$. Then there exist $z_{1},z_{2}\in S$ such that $f^{e}(z_{1})H(z_{2})-f^{e}(z_{2})H(z_{1})\neq 0$.\\
By fixing $z=z_{1}$ and $z=z_{2}$ in the identity above we get that
$$(2\alpha-\beta^{2})f^{e}(xy)=k_{1}(x)f^{e}(y)+k_{2}(x)H(y)$$
for all $x,y\in S.$  Then, using that $f^{e}$ and $H$ are even functions we deduce that $(2\alpha-\beta^{2})f^{e}(xy)=(2\alpha-\beta^{2})f^{e}(x\sigma(y))$ for all $x,y\in S.$ Combining this with (\ref{eq27}) and seeing that $f^{e}\circ\sigma=f^{e}$, $H^{*}=H$ and $f^{o}\circ\sigma=-f^{o}$ we derive that $(2\alpha-\beta^{2})^{2}f^{o}(x)f^{o}(y)=0$ for all $x,y\in S.$ So that
$(2\alpha-\beta^{2})f^{o}(x)=0$ for all $x,y\in S.$ Then $2\alpha=\beta^{2}$ or $f^{o}=0.$
\par If $2\alpha=\beta^{2}.$ Then, according to (\ref{eq27}) and (\ref{eq-18}) we deduce respectively that $(f^{e},g^{e},h^{e})$ satisfies the cosine-sine addition law
$$f^{e}(xy)=f^{e}(x)g^{e}(y)+g^{e}(x)f^{e}(y)+h^{e}(x)h^{e}(y),\,\,x,y\in S$$
and $g^{e}=\frac{\beta^{2}}{2}f^{e}+\beta h^{e},$ while the odd part $f^{o}$ satisfies (\ref{eq26}), and according to (\ref{eq25}) we have $g^{o}=-\frac{\beta^{2}}{2}f^{o}$ and $h^{o}=-\beta f^{o}.$ The result occurs in part (2).
\par If $f^{o}=0.$ Then, from (\ref{eq25}) we deduce that $g^{o}=h^{o}=0$.\\ So, as in Case A we get the result in part (1).\\
\underline{Subcase B2}: $F_{2}\neq0$. Let $x,y,z\in S$ be arbitrary. By applying (\ref{eq1}) to the pairs $(x,\sigma(yz))$ and $(x,\sigma(zy))$, and subtracting the identities obtained we get that
\begin{equation*}\begin{split}&f(xyz)-f(xzy)\\
&=f(x)\left(g^{*}(yz)-g^{*}(zy)\right)+g(x)\left(f^{*}(yz)-f^{*}(zy)\right)+h(x)\left(h^{*}(yz)-h^{*}(zy)\right).\end{split}\end{equation*}
Since $f^{e}$ is central and $F_{2}\neq0$ we derive from  (\ref{eq22}) and (\ref{eq-18}) that $h^{e}$ and $g^{e}$ are central. So, by a small computation using Lemma \ref{lem0}(3) and (\ref{eq25}), we deduce from the identity above that
\begin{equation*}f^{e}(xyz)-f^{e}(xzy)=2(\alpha f-g+\beta h)(x)f^{o}(yz).\end{equation*}
As $f^{e}$ is central we have $f^{e}(xyz)-f^{e}(xzy)=f^{e}(zxy)-f^{e}(zyx)$. So, applying the identity above and Lemma \ref{lem0}(2) we deduce that
\begin{equation}\label{eq28}\phi(x)\psi_{y}(z)=-\phi(z)\psi_{y}(x)\end{equation}
for all $x,y,z\in S,$
where $\phi:=\alpha f-g+\beta h$ and $\psi_{y}(\cdot):=f^{o}(y\cdot)$ for each fixed $y\in S.$\\
On the other hand we deduce from (\ref{eq18}) and (\ref{eq24}) that $$f^{o}(xy)+f^{o}(x\sigma(y))=2 F_{2}(x)(\lambda f^{e}+h^{e})(y)$$ for all $x,y\in S.$
As $F_{2}\neq0$ and $\{f^{e},h^{e}\}$ is linearly independent we derive from the identity above that $f^{o}\neq 0$ on $S^{2}$. Then there exist $a\in S$ such that $\psi_{a}\neq0.$ So we derive from the identity (\ref{eq28}), applied for $y=a$, that $\phi=0$. Hence,
\begin{equation}\label{eq29}g=\alpha f+\beta h\end{equation} and then (\ref{eq1}) reduces to
$$f(x\sigma(y))=(2\alpha-\beta^{2})f(x)f(y)+l(x)l(y),\,\,x,y\in S,$$ where $l:=\beta f+h.$ Now, considering the odd parts of functions in (\ref{eq29}) and applying (\ref{eq23}) we obtain $(2\alpha-\beta^{2})f^{o}(xy)=$ for all $x,y\in S.$ As $f^{o}\neq 0$ on $S^{2}$ we derive that $2\alpha-\beta^{2}=0.$ So, the functional equation above becomes
\begin{equation*}f(x\sigma(y))=l(x)l(y),\,\,x,y\in S.\end{equation*}
Moreover, from (\ref{eq29}) and that $2\alpha-\beta^{2}=0$ we deduce that $g=\frac{\beta^{2}}{2}f+\beta h.$ Notice that $l\neq0$ because $l^{o}=\beta f^{o}+h^{o}=F_{2}\neq0.$ The result occurs in part (3), which completes the proof of Lemma \ref{lem4}.
\end{proof}

\section{Main results}
In view of Lemmas \ref{lem2}-\ref{lem4} we will discuss two principal cases according to whether $f^{e}$ and $h^{e}$ are linearly independent or not. We also take into account the fact that $f^{e}$ is zero on  $S^{2}$ or not.
\begin{thm}\label{thm1} The solutions of the functional equation (\ref{eq1}) such that $h^{e}$ and $f^{e}$ are linearly dependent and $f^{e}(S^{2})=\{0\}$ are the following triples of functions $f,g,h:S\to\mathbb{C}$:\\
(A) $f=0$, $g$ arbitrary and $h=0$.\\
(B) $f$ is any non-zero function such that $f(S^{2})=\{0\}$, $g=-\frac{\lambda^{2}}{2}f$ and $h=\lambda f$ where $\lambda\in\mathbb{C}$ is a constant.
\end{thm}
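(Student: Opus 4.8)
The plan is to read off (A) and (B) at once and then prove the converse by cases on $f^{e}$. The direct check is immediate: in (B) every $x\sigma(y)$ lies in $S^{2}$, so the left side of (\ref{eq1}) is $0$ while the right side equals $-\frac{\lambda^{2}}{2}f(x)f(y)-\frac{\lambda^{2}}{2}f(x)f(y)+\lambda^{2}f(x)f(y)=0$, and (A) is trivial. For the converse, let $(f,g,h)$ solve (\ref{eq1}) with $\{f^{e},h^{e}\}$ linearly dependent and $f^{e}(S^{2})=\{0\}$. If $f=0$, then (\ref{eq1}) collapses to $h(x)h(y)=0$, so $h=0$ and $g$ is arbitrary; this is (A). If $f\neq0$ but $f^{e}=0$ (i.e.\ $f^{*}=-f$), then Lemma \ref{lem1} applies verbatim and delivers $f(xy)=0$ for all $x,y\in S$ together with a constant $\lambda$ so that $g=-\frac{\lambda^{2}}{2}f$ and $h=\lambda f$; this is (B). So assume $f^{e}\neq0$. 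Since $\{f^{e},h^{e}\}$ is dependent we may write $h^{e}=\lambda f^{e}$, and since moreover $f^{e}=0$ on $S^{2}$, Lemma \ref{lem3} yields $g^{e}=-\frac{\lambda^{2}}{2}f^{e}$, the identity (\ref{eq15}) with $k:=-\frac{\lambda^{2}}{2}f^{o}+g^{o}+\lambda h^{o}$, and the bilinear relation (\ref{eq16}). Replacing $y$ by $\sigma(y)$ in (\ref{eq15}) and using that $f^{e}$ is even while $k$ is odd rewrites it as $f^{o}(xy)=k(x)f^{e}(y)-f^{e}(x)k(y)$.

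The heart of the matter is to show $k=0$. For this I would feed a product $xy$ into the first slot of (\ref{eq1}); using $f^{e}(S^{2})=\{0\}$, Lemma \ref{lem0}(3), $g^{e}=-\frac{\lambda^{2}}{2}f^{e}$ and $h^{e}=\lambda f^{e}$, the left side vanishes and there remains
\[ f^{o}(xy)\,g(z)+g^{o}(xy)\,f(z)+h^{o}(xy)\,h(z)=0,\qquad x,y,z\in S. \]
Suppose, towards a contradiction, $k\neq0$. As $k$ is odd and $f^{e}$ is even and non-zero, $\{k,f^{e}\}$ is linearly independent, so $f^{o}(xy)=k(x)f^{e}(y)-f^{e}(x)k(y)$ is non-zero for some $x,y$; specializing the displayed identity at such a pair gives $g=pf+qh$ with constants $p,q$, and comparing even parts forces $p+q\lambda=-\frac{\lambda^{2}}{2}$, whence $g^{o}=pf^{o}+qh^{o}$ and $k=(p-\frac{\lambda^{2}}{2})f^{o}+(q+\lambda)h^{o}$. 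Substituting $g^{o}=pf^{o}+qh^{o}$ into (\ref{eq16}) produces
\[ 2p\,f^{o}(x)f^{o}(y)+q\big(f^{o}(x)h^{o}(y)+h^{o}(x)f^{o}(y)\big)+h^{o}(x)h^{o}(y)=0,\qquad x,y\in S. \]

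If $\{f^{o},h^{o}\}$ were independent this would force $h^{o}=-qf^{o}$, a contradiction; so $\{f^{o},h^{o}\}$ is dependent, and as $f^{o}\neq0$ (it is non-zero on $S^{2}$) we get $h^{o}=\nu f^{o}$ for some $\nu$, hence $k=c\,f^{o}$ for an explicit constant $c=c(p,q,\nu,\lambda)$. Then $f^{o}(xy)=c\big(f^{o}(x)f^{e}(y)-f^{e}(x)f^{o}(y)\big)$, so Lemma \ref{lem0}(3) applied to $(xy)z$ gives $0=f^{o}((xy)z)=c\,f^{o}(xy)f^{e}(z)$; since $f^{e}\neq0$ this yields $c\,f^{o}(xy)=0$ for all $x,y$, and since $f^{o}\neq0$ on $S^{2}$ it forces $c=0$, i.e.\ $k=0$, contradicting $k\neq0$. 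Hence $k=0$. Now the wrap-up is short: $f^{o}(xy)=k(x)f^{e}(y)-f^{e}(x)k(y)$ gives $f^{o}(S^{2})=\{0\}$, so $f(S^{2})=\{0\}$; and $k=0$ reads $g^{o}=\frac{\lambda^{2}}{2}f^{o}-\lambda h^{o}$, which, inserted into (\ref{eq16}), turns its left side into $\big(\lambda f^{o}(x)-h^{o}(x)\big)\big(\lambda f^{o}(y)-h^{o}(y)\big)$, so $h^{o}=\lambda f^{o}$, i.e.\ $h=\lambda f$, and then $g^{o}=-\frac{\lambda^{2}}{2}f^{o}$, i.e.\ $g=-\frac{\lambda^{2}}{2}f$ --- which is exactly (B).

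I expect the main obstacle to be precisely the step $k=0$. Lemma \ref{lem3} records only the ``linear'' consequences of (\ref{eq1}) and leaves $k$ undetermined, so forcing $k=0$ requires bringing in (\ref{eq1}) evaluated on products together with the triple-product vanishing $f^{o}(xyz)=0$ of Lemma \ref{lem0}(3); the case split on $\{f^{o},h^{o}\}$ (independent versus dependent) is where the bookkeeping must be carried out with care, and throughout one has to keep track of which of the functions involved vanish on $S^{2}$.
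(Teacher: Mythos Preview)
Your argument is correct, and it proceeds along a genuinely different line from the paper's proof. In the case $f^{e}\neq0$, the paper does not target $k$ directly; instead it splits into the subcases $h^{o}\neq0$ and $h^{o}=0$ (the latter splitting again into $f^{o}=0$ or $g^{o}=0$), and in each subcase reduces (\ref{eq2}) or (\ref{eq15}) with Lemma~\ref{lem0}(3) to pin down the parameter relations and conclude $f^{o}(S^{2})=\{0\}$. Your approach is more unified: you first manufacture the auxiliary identity $f^{o}(xy)\,g(z)+g^{o}(xy)\,f(z)+h^{o}(xy)\,h(z)=0$ by feeding a product into the first slot of (\ref{eq1}), use it (under the assumption $k\neq0$, hence $f^{o}\neq0$ on $S^{2}$) to get $g\in\mathrm{span}\{f,h\}$, and then let (\ref{eq16}) collapse the situation to $k=c\,f^{o}$, whence Lemma~\ref{lem0}(3) forces $c=0$. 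The wrap-up via the factorization of (\ref{eq16}) into $(\lambda f^{o}-h^{o})(x)(\lambda f^{o}-h^{o})(y)=0$ is also cleaner than the paper's case-by-case verification. What your route buys is a single contradiction argument in place of a three-branch case split; what the paper's route buys is that it never needs the intermediate relation $g=pf+qh$ and stays closer to the raw identities (\ref{eq2}) and (\ref{eq15}). Both are valid.
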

\begin{proof} If $f=0$, then (\ref{eq1}) implies
$h(x)h(y)=0$ for all $x,y\in S$, so $h=0$ and $g$ is arbitrary. The solution is of category (A). Assume that
$f\neq0$.\\
If $f^{e}=0$, then we get, by Lemma \ref{lem1}(2), a solution of category (B).\\ In what remains of the proof we assume that $f^{e}\neq0$. So, $h^{e}$ and $f^{e}$ being linearly dependent, we derive, according to Lemma \ref{lem3}, that
\begin{equation}\label{eq+31}g^{e}=-\frac{\lambda^{2}}{2} f^{e}\,\,\text{and}\,\,h^{e}=\lambda f^{e}\end{equation}
for some constant $\lambda\in\mathbb{C}$ and identities (\ref{eq15}) and (\ref{eq16}) are satisfied.
We get from (\ref{eq16}) that $h^{o}=0$ iff $f^{o}=0$ or $g^{o}=0.$ So, we split the discussion into the cases $h^{o}\neq0$ and $h^{o}=0.$\\
\underline{Case A}: $h^{o}\neq0$. Then, we get from (\ref{eq16}) that $f^{o}\neq0$, $g^{o}\neq0$ and there exists $(\alpha,\beta)\in\mathbb{C}\times\mathbb{C}\setminus\{(0,0)\}$ such that
$h^{o}=\alpha f^{o}+\beta g^{o}$. Proceeding as in part corresponding to $h\neq0$ in the proof of Lemma \ref{lem1} we deduce that there exists a constant $\delta\in\mathbb{C}\setminus\{0\}$ such that $g^{o}=-\frac{\delta^{2}}{2}f^{o}$ and $h^{o}=\delta f^{o}$.\\
Now, (\ref{eq2}) reduces to
\begin{equation}\label{eq++31}f^{o}(xy)+f^{o}(x\sigma(y))=-(\delta-\lambda)^{2}f^{o}(x)f^{e}(y)\end{equation} for all $x,y\in S.$\\ Let $x,y,z\in S$ be arbitrary. By applying (\ref{eq++31}) to the pair $(xy,z)$ and taking Lemma \ref{lem0}(3) into account, we get that $(\delta-\lambda)^{2}f^{o}(xy)f^{e}(z)=0$. So, $x,y,z\in S$ being arbitrary and $f^{e}\neq0$, we derive that $(\delta-\lambda)^{2}f^{o}(xy)=0$ for all $x,y\in S.$ Then, by multiplying (\ref{eq++31}) by $(\delta-\lambda)^{2}$ and seeing that $f^{o}\neq0$ and $f^{e}\neq0$, we derive that $\delta=\lambda.$ So that
\begin{equation}\label{eq+++31}g^{o}=-\frac{\lambda^{2}}{2} f^{o}\,\,\text{and}\,\,h^{o}=\lambda f^{o}.\end{equation}
Hence, we deduce from (\ref{eq+31}) and (\ref{eq+++31}) that $g=-\frac{\lambda^{2}}{2} f\,\,\text{and}\,\,h^{o}=\lambda f.$
Moreover, using Lemma \ref{lem0}(2) and that $f^{o}$ is odd, we infer from (\ref{eq++31}) that  $f^{o}(xy)=-f^{o}(x\sigma(y))=f^{o}(\sigma(y)x)=-f^{o}(y\sigma(x))=f^{o}(yx)=-f^{o}(xy)$, then $f^{o}(xy)=0$ for all $x,y\in S.$ So, $f(xy)=0$ for all $x,y\in S$. Hence, we obtain a solution of category (B).\\
\underline{Case B}: $h^{o}=0$. In this case we have $f^{o}=0$ or $g^{o}=0.$
\par If $f^{o}=0$ then $k=g^{o}$, where $k:S\to\mathbb{C}$ is the function defined in Lemma \ref{lem3}(2), i.e., $k:=-\frac{\lambda^{2}}{2} f^{o}+g^{o}+\lambda h^{o}.$ So (\ref{eq15}) becomes $g^{o}(x)f^{e}(y)+f^{e}(x)g^{o}(y)=0$ for all $x,y\in S.$ Hence, according to \cite[Exercise 1.1(b)]{H.St}, we get that $g^{o}=0$ because $f^{e}\neq0.$ Then, in view (\ref{eq+31}), we derive that $g=-\frac{\lambda^{2}}{2} f\,\,\text{and}\,\,h=\lambda f.$ Moreover, $f(xy)=f^{e}(xy)+f^{o}(xy)=0$ for all $x,y\in S.$ So, we obtain a solution of category (B).
\par If $g^{o}=0$ then $k=-\frac{\lambda^{2}}{2} f^{o}$ and (\ref{eq15}) becomes
\begin{equation}\label{eq++++31}2f^{o}(x\sigma(y))=-\lambda^{2}f^{o}(x)f^{e}(y)-\lambda^{2}f^{e}(x)f^{o}(y)\end{equation}
for all $x,y\in S.$ So, taking Lemma \ref{lem0}(3) and that $f^{e}(S^{2})=\{0\}$ into account, we derive from the identity above that $\lambda^{2}f^{o}(xy)=0$ for all $x,y\in S.$ Now, by multiplying (\ref{eq++++31}) by $\lambda^{2}$ and seeing that $f^{e}\neq0$ we derive, according to \cite[Exercise 1.1(b)]{H.St}, that $\lambda f^{o}=0.$\\
If $\lambda=0$ then, in view of (\ref{eq+31}), we get that $g=g^{e}+g^{o}=0$ and $h=h^{e}+h^{o}=0$. Moreover, we deduce from (\ref{eq++++31}) that $f^{o}(xy)=0$ for all $x,y\in S.$ Hence, $f(xy)=f^{e}(xy)+f^{o}(xy)=0$ for all $x,y\in S.$ So, we obtain a solution of category (B).\\
If $f^{o}=0$ then we go back to the first part in this case and we obtain again a solution of category (B).
\par Conversely, if $f,g$ and $h$ are of the forms (A)-(B) in Theorem \ref{thm1} we check that $f,g$ and $h$ satisfy the functional equation (\ref{eq1}), $f^{e}$ and $h^{e}$ are linearly dependent and $f^{e}(S^{2})=\{0\}.$ This completes the proof of Theorem \ref{thm1}.
\end{proof}
\begin{thm}\label{thm2} The solutions $f,g,h:S\to\mathbb{C}$ of the functional equation (\ref{eq1}) such that $h^{e}$ and $f^{e}$ are linearly dependent and $f^{e}(S^{2})\neq\{0\}$, are the following, where $m,\chi,\chi_{1},\chi_{2}:S\to \mathbb{C}$ are multiplicative functions such that  $\chi_{1}\neq\chi_{2}$, $k,\varphi,\psi:S\to \mathbb{C}$ are functions such that $k^{*}=-k$ and $k$ is $0$-addive, $\varphi$ is non-zero $\chi$-additive and $\psi$ is of type $(\chi,\sigma,\varphi)$-cosine-sine, and $\lambda,\mu,\eta\in\mathbb{C},\rho,c\in\mathbb{C}\setminus\{0\}$ are constants such that $\lambda^{2}-2\mu\neq0$ and $\rho^{2}=-\frac{1}{4(\lambda^{2}-2\mu)}.$\\
(A) $$f=\frac{1}{2(\lambda^{2}-2\mu)}(m+m^{*})+k,\,\,g=-\frac{\mu}{2(\lambda^{2}-2\mu)}(m+m^{*})+\lambda\rho(m-m^{*})-\frac{\eta^{2}}{2}k$$$$\,\,\text{and}\,\,h=\frac{\lambda}{\lambda^{2}-2\mu}(m+m^{*})-\rho(m-m^{*})+\eta k,$$
\par (i) If $m=m^{*}\neq0$ then $$\text{and}\,\,(\eta^{2}-2\lambda\eta+2\mu=0\,\,\text{and}\,\,\eta\neq0)\,\,\text{or}\,\,(\eta=\mu=0\,\,\text{and}\,\,\lambda\neq0).$$
\par (ii) If $m\neq m^{*}$ then  $k=0$.\\
(B) $$f=\frac{\chi_{1}-\chi_{2}}{2c},\,\,g=\frac{(2c-\lambda^{2})\chi_{1}+(2c+\lambda^{2})\chi_{2}}{4c}\,\,\text{and}\,\,h=\frac{\lambda(\chi_{1}-\chi_{2})}{2c}$$ such that $\chi_{1}^{*}=\chi_{1}$ and $\chi_{2}^{*}=\chi_{2}.$\\
(C) $$f=\varphi,\,\,g=\chi-\frac{\lambda^{2}}{2}\varphi\,\,\text{and}\,\,h=\lambda \varphi$$ such that $\chi\neq0$, $\chi^{*}=\chi$ and $\varphi^{*}=\varphi.$\\
(D) \begin{center}
$\begin{pmatrix} f \\
g \\ h \end{pmatrix}=\begin{pmatrix} 1 & 0 & 0 \\
-\frac{\lambda^{2}}{2}& 1 & i\lambda\\  \lambda & 0 &-i \end{pmatrix}\begin{pmatrix} \psi\\
\chi \\ \varphi \end{pmatrix}$\end{center}
such that $\psi^{*}=\psi$, $\chi^{*}=\chi$, $\varphi^{*}=-\varphi$ and $\varphi\neq0$.\\
(E) \begin{center}
$\begin{pmatrix} f \\
g \\ h \end{pmatrix}=\begin{pmatrix}
-2c^{2}& 4c^{2}& 0\\ \frac{1+4\lambda^{2}c^{2}}{4}& \frac{1-4\lambda^{2}c^{2}}{2} &-\lambda c \\ -2\lambda c^{2}& 4\lambda c^{2} &c\end{pmatrix}\begin{pmatrix} \chi_{1}+\chi_{1}^{*}\\
\chi \\ \chi_{1}-\chi_{1}^{*} \end{pmatrix}$\end{center}
such that $\chi^{*}=\chi$, $\chi\neq\chi_{1}$ and $\chi_{1}^{*}\neq\chi_{1}.$\\
(F) \begin{center}
$\begin{pmatrix} f \\
g \\ h \end{pmatrix}=\begin{pmatrix}
1&0& 1\\ -\frac{\lambda^{2}}{2} &-\lambda i&-\frac{\lambda^{2}}{2} \\ \lambda & i &\lambda\end{pmatrix}\begin{pmatrix} \psi\\
\varphi \\ k\end{pmatrix}$,\end{center}
where $\psi$ is of type $(0,\varphi)$-cosine-sine such that $\psi^{*}=\psi$ and $\varphi^{*}=-\varphi.$
\end{thm}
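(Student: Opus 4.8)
The plan is to deduce everything from Lemma \ref{lem2}. Since $h^{e}$ and $f^{e}$ are linearly dependent and $f^{e}\neq 0$ on $S^{2}$, write $h^{e}=\lambda f^{e}$ and apply Lemma \ref{lem2}; this splits the discussion into its case~(1), where $f=f^{e}$ is even, $g^{o}=-\lambda h^{o}$, and $(f,\,g^{e}+\frac{\lambda^{2}}{2}f,\,h^{o})$ solves the cosine-sine addition law (\ref{eq17}), and its case~(2), where $g^{e}=-\mu f^{e}$, $f^{o}$ is $0$-additive, $g^{o}=\mu f^{o}-\lambda h^{o}$, and (\ref{eq14})--(\ref{eq140}) hold. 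In each branch I would reconstruct $g$ and $h$ from the recovered data, and at the very end verify the converse by substituting (A)--(F) into (\ref{eq1}) and checking that $f^{e},h^{e}$ are linearly dependent with $f^{e}(S^{2})\neq\{0\}$; that is a routine computation.

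For case~(1), set $F:=f$, $G:=g^{e}+\frac{\lambda^{2}}{2}f$, $H:=h^{o}$, so that $(F,G,H)$ satisfies (\ref{eq17}) with $F^{*}=F$, $G^{*}=G$, $H^{*}=-H$, and $g=G-\frac{\lambda^{2}}{2}F-\lambda H$, $h=\lambda F+H$. If $\{F,H\}$ is linearly dependent, then, $F$ being even and nonzero and $H$ odd, necessarily $H=h^{o}=0$; (\ref{eq17}) becomes the sine addition law for $(F,G)$, whose classification on semigroups (\cite{stet1}), together with $F^{*}=F$ and $G^{*}=G$, gives either $F$ a nonzero $\chi$-additive $\varphi$ with $\chi^{*}=\chi$, $\varphi^{*}=\varphi$ (family~(C)), or $F=\frac{\chi_{1}-\chi_{2}}{2c}$ with $\chi_{i}^{*}=\chi_{i}$ (family~(B)). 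If $\{F,H\}$ is linearly independent, I would apply Proposition \ref{prop1}. Its possibility~(1) gives $G=\chi$ multiplicative; the parities force $\chi=\frac{m+m^{*}}{2}$ with $\chi$ multiplicative, hence $m=m^{*}$, $\beta=0$, $m=\chi$, and $H$ a nonzero odd $\chi$-additive function --- this is family~(D). Its possibility~(2) forces (after ruling out $\beta\neq 0$, which would give $H\equiv 0$) that $(H,\chi+2\alpha F)$ solves the sine addition law with $\chi$ even; the difference-of-multiplicatives branch, $H$ being odd, gives $\chi_{2}=\chi_{1}^{*}\neq\chi_{1}$ and hence family~(E), while the other branch makes $(F,G)$ satisfy the sine addition law, forcing $H=0$. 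Possibility~(3) involves a $\delta$-shift of an inner triple which, after imposing the three parity relations (and using Proposition \ref{prop001} to discard $\chi$-additive terms that are linear combinations of multiplicative functions), contributes family~(A) in the regime $m\neq m^{*}$ (where $f^{o}=0$, so $k=0$), possibly overlapping with solutions coming from case~(2).

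For case~(2), the key observation is that by centrality of $f^{e}$ (Lemma \ref{lem0}(1)), identity (\ref{eq14}) becomes $f^{e}(xy)+f^{e}(x\sigma(y))=2d\,f^{e}(x)f^{e}(y)$ with $d:=\lambda^{2}-2\mu$, so $d f^{e}$ is a central solution of the d'Alembert $\sigma$-equation $\phi(xy)+\phi(x\sigma(y))=2\phi(x)\phi(y)$. If $d=0$, then (\ref{eq140}) reduces to $f^{e}(x\sigma(y))=r(x)r(y)$ with $r:=h^{o}-\lambda f^{o}$ odd and (one checks) $0$-additive, so $\psi:=f^{e}$ is of type $(0,\varphi)$-cosine-sine with $\varphi\propto r$; with $k:=f^{o}$ this is family~(F). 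If $d\neq 0$, then $d f^{e}\neq 0$ on $S^{2}$, so the known form of the central solutions of the d'Alembert $\sigma$-equation on semigroups gives $d f^{e}=\frac{m+m^{*}}{2}$ for a multiplicative $m$; substituting $f^{e}=\frac{m+m^{*}}{2d}$ into (\ref{eq140}), and writing the quadratic form on its right-hand side as $r(x)r(y)-d\,f^{o}(x)f^{o}(y)$ (with $r=h^{o}-\lambda f^{o}$), yields $r(x)r(y)-d\,f^{o}(x)f^{o}(y)=\rho^{2}(m-m^{*})(x)(m-m^{*})(y)$ with $\rho^{2}=-\frac{1}{4d}$. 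A rank argument, together with $f^{e}\neq 0$ on $S^{2}$, then forces $f^{o}=0$ when $m\neq m^{*}$, and $r=\pm\sqrt{d}\,f^{o}$ --- equivalently $h^{o}=\eta f^{o}$ with $\eta^{2}-2\lambda\eta+2\mu=0$ --- when $m=m^{*}$; reconstructing $g$ and $h$ with $k:=f^{o}$ gives family~(A), cases~(ii) and~(i) respectively.

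I expect the main obstacle to be case~(2): establishing that $h^{o}$ (hence $r$) is $0$-additive, invoking the d'Alembert classification in sufficient semigroup generality, carrying out the rank argument that separates $m\neq m^{*}$ from $m=m^{*}$ and pins down the quadratic constraint on $\eta$, and then organising the outcome into the single matrix form~(A). A secondary difficulty, in the linearly independent branch of case~(1), is matching each possibility of Proposition \ref{prop1} against the parity relations without missing or double-listing a solution family --- in particular seeing that only possibility~(3) yields the $m\neq m^{*}$ solutions, and that the $\chi'$-additive sine branch in possibility~(2) is vacuous.
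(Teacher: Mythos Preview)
Your overall architecture is exactly that of the paper: apply Lemma~\ref{lem2} to split into its case~(1) (the cosine--sine triple $(f^{e},g^{e}+\tfrac{\lambda^{2}}{2}f^{e},ih^{o})$, analysed via Proposition~\ref{prop1}) and its case~(2) (the d'Alembert-type equation for $f^{e}$ plus the quadratic identity \eqref{eq140}), and your treatment of case~(2) --- including the split $\lambda^{2}-2\mu=0$ versus $\neq 0$, the appeal to the d'Alembert classification for $f^{e}$, and the rank argument separating $m\neq m^{*}$ from $m=m^{*}$ --- tracks the paper's Subcases~B1, B2.1, B2.2 closely.

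There are, however, concrete gaps in your handling of Proposition~\ref{prop1} in case~(1). In possibility~(2) your claim that ``$\beta\neq 0$ would give $H\equiv 0$'' is false: Proposition~\ref{prop1}(2) only asserts that $(H,\chi+2\alpha F+\tfrac{\beta}{2}H)$ satisfies the sine addition law, and this places no constraint forcing $H=0$ when $\beta\neq 0$. The paper keeps $\beta$ free, applies the sine-addition classification to obtain $2\rho H=\chi_{1}-\chi_{2}$ and $\chi+2\alpha F+\tfrac{\beta}{2}H=\tfrac{\chi_{1}+\chi_{2}}{2}$, and then in the degenerate branch $\chi_{1}=\chi_{2}$ uses Proposition~\ref{prop001} (not ``$H=0$'') to reach a contradiction; in the nondegenerate branch the parity of $H$ yields $\chi_{2}=\chi_{1}^{*}$ and, after invoking \cite[Theorem~4.2]{AjElq2} and \cite[Theorem~4.5]{AjElq}, family~(E). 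Secondly, your reading of possibility~(3) is off: the parity relations $F^{*}=F$, $G^{*}=G$, $H^{*}=-H$, combined with the inner triple being of the forms found in possibilities~(1)--(2), force $\delta=0$, so possibility~(3) yields nothing new beyond (D) and (E). Family~(A)(ii) does \emph{not} emerge here as a separate contribution; it comes solely from case~(2) (and, if you like, it is already contained in family~(E) with $\chi=0$). Finally, in case~(2) with $m=m^{*}$ you should also record the subcase $f^{o}=0$, which the paper sends back to the sine-addition analysis (families~(B)/(C)) rather than to family~(A).
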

\begin{proof} Since $f^{e}(S^{2})\neq\{0\}$ we have $f^{e}\neq0$. As $h^{e}$ and $f^{e}$ are linearly dependent there exists a constant $\lambda\in\mathbb{C}$ such that $h^{e}=\lambda f^{e}$ and, according to Lemma \ref{lem2} there exists a constant $\mu\in\mathbb{C}$ such that (\ref{eq000}) is satisfied. We split the discussion into the following cases:\\
\underline{Case A}: $f^{o}=0$, $g^{o}=-\lambda h^{o}$ and the triple $(f^{e},g^{e}+\frac{\lambda^{2}}{2}f^{e},h^{o})$ satisfies (\ref{eq-11}). As in \cite[Section 4]{AjElq} we solve (\ref{eq-11}) according to whether $f^{e}$ and $h^{o}$ are linearly independent or not. As $f^{e}\neq0$ is even and $h^{o}$ is odd we split the discussion into the subcases $h^{o}=0$ and $h^{o}\neq0.$\\
\underline{Subcase A1}: $h^{o}=0.$ Then (\ref{eq-11}) reduces to the sine addition law below $$f^{e}(xy)=f^{e}(x)\left(g^{e}+\frac{\lambda^{2}}{2}f^{e}\right)(y)+f^{e}(y)\left(g^{e}+\frac{\lambda^{2}}{2}f^{e}\right)(x),\,\,x,y\in S.$$
Since $f^{e}\neq0$  we deduce, by applying \cite [Theorem 4.1(b)]{H.St}, that there exist a constant $c\in\mathbb{C}$ and two multiplicative functions $\chi_{1},\chi_{2}:S\to\mathbb{C}$ such that $g^{e}+\frac{\lambda^{2}}{2}f^{e}=\frac{\chi_{1}+\chi_{2}}{2}$ and $2cf^{e}=\chi_{1}-\chi_{2}.$ Since $f^{o}=0$ and $g^{o}=-\lambda h^{o}=0$ we get that $g+\frac{\lambda^{2}}{2}f=\frac{\chi_{1}+\chi_{2}}{2}$ and $2cf=\chi_{1}-\chi_{2},$ $f^{*}=f$ and $g^{*}=g.$ So, $\chi_{1}^{*}=\chi_{1}$ and $\chi_{2}^{*}=\chi_{2}.$
\par If $\chi_{1}\neq\chi_{2}$ then $c\neq0$ and we get a solution of category (B) by a small computation.
\par If $\chi_{1}=\chi_{2}$ then (\ref{eq-11}) reduce to $f^{e}(xy)=f^{e}(x)\chi(y)+\chi(x)f^{e}(x),$  where $\chi:=\chi_{1}.$ As $f^{e}(S^{2})\neq\{0\}$  we have $\chi\neq0.$ By putting $\varphi:=f^{e}$ we get a solution of part (C).\\
\underline{Subcase A2}: $h^{o}\neq0$. Then $f^{e}$ and $h^{o}$ are linearly independent because $f^{e}\neq0$ is even and $h^{o}\neq0$ is odd. According to Proposition \ref{prop1} we have one of the following subcases:\\
\underline{Subcase A2.1}: There exist a constant $\beta\in\mathbb{C}$ and two multiplicative functions $m,\chi:S\to\mathbb{C}$ such that
\begin{equation}\label{eq32}g^{e}+\frac{\lambda^{2}}{2}f^{e}=\chi,\,ih^{o}-\beta f^{e}=\varphi\,\,\text{and}\,\,\chi+i\beta h^{o}=m,\end{equation}
where $\varphi:S\to\mathbb{C}$ is $\chi$-additive.
\par If $\beta\neq0$ then, using second and third identities in (\ref{eq32}), we get that $f^{e}=c^{2}m-c^{2}\chi-c\varphi$ and $ih^{o}=c\,m-c\chi$ where $c:=\beta^{-1}\neq0$. Moreover, using first and second identities in (\ref{eq32}), and seeing that $f^{e}\circ\sigma=f^{e}$ and $g^{e}\circ\sigma=g^{e}$ we deduce that $\chi^{*}=\chi$ and $c\,m^{*}-\varphi^{*}=c\,m-\varphi$. Then $\varphi^{*}$ is $\chi$-additive, so is $\varphi^{*}-\varphi$ and $\varphi^{*}-\varphi=c\,m^{*}-c\,m\in \text{span}\{m,m^{*}\}.$ Then, according to Proposition \ref{prop001}, we derive that $\varphi^{*}-\varphi =0$, which implies that $c(m^{*}-m)=0$. This contradicts the fact that $c\neq0$ and $m\neq m^{*}$.\\
Hence $\beta=0$. So we derive from (\ref{eq32}) that
\begin{center}
\begin{equation}\label{eq330}\begin{pmatrix} f^{e} \\
g^{e}+\frac{\lambda^{2}}{2}f^{e} \\ ih^{o} \end{pmatrix}=\begin{pmatrix} 1 & 0 & 0 \\
0& 1 & 0\\ 0 & 0 & 1 \end{pmatrix}\begin{pmatrix} \psi\\
\chi \\ \varphi \end{pmatrix},\end{equation}\end{center}
which implies that
\begin{center}
\begin{equation}\label{eq3300}\begin{pmatrix} f^{e} \\
g^{e}\\ h^{o} \end{pmatrix}=\begin{pmatrix} 1 & 0 & 0 \\
-\frac{\lambda^{2}}{2}& 1 & 0\\ 0 & 0 & -i \end{pmatrix}\begin{pmatrix} \psi\\
\chi \\ \varphi \end{pmatrix},\end{equation}\end{center} where $\psi:S\to\mathbb{C}$ is of type $(\chi,\varphi)$-cosine-sine such that $\psi^{*}=\psi$, $\chi^{*}=\chi$, $\varphi^{*}=-\varphi$ and $\varphi\neq0$. As $h^{e}=\lambda f^{e}=\lambda\psi$ and $g^{o}=-\lambda h^{o}=i\lambda\varphi$ we get that
\begin{center}
\begin{equation}\label{eq33000}\begin{pmatrix} f^{o} \\
g^{o} \\ h^{e} \end{pmatrix}=\begin{pmatrix} 0 & 0 & 0 \\
0& 0 & i\lambda\\ \lambda & 0 & 0 \end{pmatrix}\begin{pmatrix} \psi\\
\chi \\ \varphi \end{pmatrix}.\end{equation}\end{center}
By adding (\ref{eq3300}) and (\ref{eq33000}) we get part (D).\\
\underline{Subcase A2.2}: There exist constants $\alpha\in\mathbb{C}\setminus\{0\}, \beta\in\mathbb{C}$ and a multiplicative function $\chi:S\to\mathbb{C}$ such that
\begin{equation}\label{eq034}-\alpha\,f^{e}+g^{e}+\frac{\lambda^{2}}{2}f^{e}=\chi\end{equation}
and the pair $(h^{o},\chi+2\,\alpha f^{e}+\dfrac{1}{2}\,\beta h^{o})$ satisfies the sine addition law, i.e.,
\begin{equation}\label{eq34}h^{o}(xy)=h^{o}(x)(\chi+2\,\alpha f^{e}+\dfrac{1}{2}\,\beta h^{o})(y)+(\chi+2\,\alpha f^{e}+\dfrac{1}{2}\,\beta h^{o})(x)h^{o}(y)\end{equation}
for all $x,y\in S.$ Then, according to \cite [Theorem 4.1(b)]{H.St}, there exist two multiplicative functions $\chi_{1},\chi_{2}:S\to\mathbb{C}$ and a constant $\rho\in\mathbb{C}$ such that
\begin{equation}\label{eq35}2\rho\,h^{o}=\chi_{1}-\chi_{2}\end{equation}
and
\begin{equation}\label{eq36}\chi+2\,\alpha f^{e}+\dfrac{1}{2}\,\beta h^{o}=\frac{\chi_{1}+\chi_{2}}{2}.\end{equation}
\par If $\chi_{1}=\chi_{2}$ then $\rho\,h^{o}=0$. So $\rho=0$ because $h^{o}\neq0$. By putting $m:=\chi_{1}$ we deduce from (\ref{eq34}) and (\ref{eq36}) that
\begin{equation}\label{eq-37}2\,\alpha f^{e}=-\chi+m-\dfrac{1}{2}\,\beta h^{o}\end{equation}
and that
\begin{equation}\label{eq37}h^{o}(xy)=h^{o}(x)m(y)+m(x)h^{o}(y),\,\,x,y\in S\end{equation}
Applying this to the pair $(\sigma(x),\sigma(y))$ and subtracting the identity obtained from (\ref{eq37}), we get that $h^{o}(x)(m(y)-m^{*}(y))=-h^{o}(y)(m(x)-m^{*}(x))$ for all $x,y\in S.$ As $h^{o}\neq0$ we deduce, according to \cite[Exercise 1.1(b)]{H.St}, that $m^{*}=m.$ Since $f^{e}$ is even and $h^{o}$ is odd we derive from (\ref{eq-37}) that
\begin{equation}\label{eq38}\chi^{*}-\chi=\beta h^{o}.\end{equation}
Since $h^{o}$ is $m$-additive (see (\ref{eq37})) so is $\beta h^{o}$. Then we derive from (\ref{eq38}), according to Proposition \ref{prop001}, that $\beta=0$ because $h^{o}\neq0.$ Hence, we deduce from (\ref{eq-37}) and (\ref{eq034}) that $f^{e}=\frac{m-\chi}{2\,\alpha}$ and $g^{e}+\frac{\lambda^{2}}{2}f^{e}=\frac{m+\chi}{2}.$ Substituting this in the functional equation (\ref{eq-11}) we derive, by a simple computation, that $f^{e}(xy)=f^{e}(xy)-h^{o}(x)h^{o}(y)$ for all $x,y\in S.$ So $h^{o}=0$, which contradicts the assumption on $h^{o}$ in this case.\\
Hence $\chi_{1}\neq\chi_{2}$. Then $\rho\neq0$ and it follows from (\ref{eq35}) that $h^{o}=\frac{\chi_{1}-\chi_{2}}{2\rho}$ which implies that $\chi_{1}+\chi_{1}^{*}=\chi_{2}+\chi_{2}^{*}$ because $h^{o}$ is an odd function. As $\chi_{1}\neq\chi_{2}$ we derive from the last identity, by applying
\cite[corllary 3.19]{H.St}, that $\chi_{2}=\chi_{1}^{*}.$\\
Now, considering the functional equation (\ref{eq-11}) and proceeding as in Subcase B.2 of the proof of \cite [Theorem 4.2]{AjElq2} and then as in Case A.4 of the proof of \cite [Theorem 4.5]{AjElq}, we derive, by putting $c:=\frac{1}{2\rho}$, that
\begin{center}
\begin{equation}\label{eq0330}\begin{pmatrix} f^{e} \\
g^{e}+\frac{\lambda^{2}}{2}f^{e} \\ ih^{o} \end{pmatrix}=\begin{pmatrix} -2c^{2} & 4c^{2} & 0 \\
\frac{1}{4}& \frac{1}{2} & 0\\ 0 & 0 & ic \end{pmatrix}\begin{pmatrix} \chi_{1}+\chi_{1}^{*}\\
\chi \\ \chi_{1}-\chi_{1}^{*}\end{pmatrix},\end{equation}\end{center}
which implies that
\begin{center}
\begin{equation}\label{eq40}\begin{pmatrix} f^{e} \\
g^{e}\\ h^{o} \end{pmatrix}=\begin{pmatrix} -2c^{2} & 4c^{2} & 0 \\
\frac{1+4\lambda^{2}c^{2}}{4}& \frac{1-4\lambda^{2}c^{2}}{2} & 0\\ 0 & 0 & c \end{pmatrix}\begin{pmatrix} \chi_{1}+\chi_{1}^{*}\\
\chi \\ \chi_{1}-\chi_{1}^{*} \end{pmatrix}.\end{equation}\end{center}
Since $c\neq0$, $f^{e}\circ\sigma=f^{e}$ and $h^{o}\circ\sigma=-h^{o}$ we get, by using (\ref{eq40}), that $\chi^{*}=\chi$, $\chi\neq\chi_{1}$ and $\chi_{1}^{*}\neq\chi_{1}.$
As $f^{o}=0$, $g^{o}=-\lambda h^{o}$ and $h^{e}=\lambda f^{e}$, we get that
\begin{center}
\begin{equation}\label{eq41}\begin{pmatrix} f^{o} \\
g^{o} \\ h^{e} \end{pmatrix}=\begin{pmatrix} 0 & 0 & 0 \\
0& 0 & -\lambda c\\ -2\lambda c^{2}& 4\lambda c^{2}& 0 \end{pmatrix}\begin{pmatrix} \chi_{1}+\chi_{1}^{*}\\
\chi \\ \chi_{1}-\chi_{1}^{*} \end{pmatrix}.\end{equation}\end{center}
 By adding (\ref{eq40}) and (\ref{eq41}) we get part (E).\\
 \underline{Subcase A2.3}: $f^{e}=F,\,\,\,\,g^{e}+\frac{\lambda^{2}}{2}f^{e}=-\dfrac{1}{2}\delta^{2}\,F+G+\delta\,H,\,\,\,\,ih^{o}=-\delta
\,F+H,$
where $\delta\in\mathbb{C}$ is a constant and $F,G,H:S\to\mathbb{C}$ are functions such that the triple $(F,G,H)$ is of the form in (\ref{eq330}) or (\ref{eq40}) with the same constraints on $(\psi,\chi,\varphi)$. Notice that if $(F,G,H)$ is of the form in (\ref{eq330}) then $G=\chi$ and $H=\varphi$ with $\chi^{*}=\chi$ and $\varphi^{*}=-\varphi\neq0$, if $(F,G,H)$ is of the form in (\ref{eq40}) then $G=\frac{1}{4}(\chi_{1}+\chi_{1}^{*})+\frac{1}{2}\chi$ and $H=ic(\chi_{1}-\chi_{1}^{*})$ with $\chi^{*}=\chi$ and $\chi_{1}\neq\chi_{1}^{*}$. Then $G^{*}=G$ and $H^{*}=-H\neq0$. Moreover $G^{*}-G=-\delta(H^{*}-H)$, then $\delta=0$. So we obtain a solution of part (D) or (E).\\
\underline{Case B}: $g^{e}=-\mu f^{e}$ for some constant $\mu\in\mathbb{C}$, $f^{o}$, $f^{e}$ and $h^{o}$ satisfy (\ref{eq013}), (\ref{eq14}) and (\ref{eq140}). By applying (\ref{eq140}) to the pair $(x,\sigma(y))$ we obtain
\begin{equation}\label{eq42}\begin{split}&f^{e}(xy)=(\lambda^{2}-2\mu)\left(f^{e}(x)f^{e}(y)+f^{o}(x)f^{o}(y)\right)\\
&\quad\quad\quad\quad\quad-\left(\lambda f^{o}(x)-h^{o}(x)\right)\left(\lambda f^{o}(y)-h^{o}(y)\right).\end{split}\end{equation}
We have two cases according to $\lambda^{2}-2\mu=0$ or $\lambda^{2}-2\mu\neq0$.\\
\underline{Subcase B1}: $\lambda^{2}=2\mu$. Then (\ref{eq42}) becomes
\begin{equation*}f^{e}(xy)=-\left(\lambda f^{o}(x)-h^{o}(x)\right)\left(\lambda f^{o}(y)-h^{o}(y)\right)\end{equation*} for all $x,y\in S.$ So, for $\psi:=f^{e}$  and $ -i\varphi:=\lambda f^{o}-h^{o}$ we get that $\psi$ is of type $(0,\varphi)$-cosine-sine, $\psi^{*}=\psi$ and $\varphi^{*}=-\varphi$. Now, using that $g^{e}=-\frac{^{2}}{2}f^{e}$, $g^{o}+\lambda h^{o}=\mu f^{o}$, and $\lambda^{2}=2\mu$ we obtain a solution of part (F) by putting $k:=f^{o}.$\\
\underline{Subcase B2}: $\lambda^{2}\neq2\mu$. By subtracting (\ref{eq140}) from (\ref{eq42}) we obtain
\begin{equation}\label{eq--43}f^{e}(xy)-f^{e}(x\sigma(y))=2(\lambda^{2}-2\mu)f^{o}(x)f^{o}(y)-2F(x)F(y)\end{equation}
for all $x,y\in S$, where
\begin{equation}\label{eq-43}F:=\lambda f^{o}-h^{o}.\end{equation}
Let $x,y,z\in S$ be arbitrary. By applying the identity above to the pair $(xy,z)$ and taking (\ref{eq013}) into account we get that
\begin{equation}\label{eq43}f^{e}(xyz)-f^{e}(xy\sigma(z))=-2F(xy)F(z).\end{equation}
On the other hand, according to \cite[Lemma 3.2]{ElqRed}, we get from (\ref{eq14}) that
\begin{equation}\label{eq44}f^{e}=\frac{1}{2(\lambda^{2}-2\mu)}(m+m^{*})\end{equation}
where $m:S\to\mathbb{C}$ is a multiplicative function.\\
We split the discussion into the cases $m\neq m^{*}$ and $m=m^{*}.$\\
\underline{Subcase B2.1}: $m\neq m^{*}$. By substituting (\ref{eq44}) in (\ref{eq43}) we obtain
\begin{equation}\label{eq45} \left(m(xy)-m^{*}(xy)\right)\left(m(z)-m^{*}(z)\right)=-4(\lambda^{2}-2\mu)F(xy)F(z)\end{equation}
for all $x,y,z\in S.$ As $m\neq m^{*}$ we derive from (\ref{eq45}) that $F\neq0$ on $S^{2}$. So there exist $x_{0},y_{0}\in S$ such that $F(x_{0}y_{0})\neq0$. Putting $(x,y)=(x_{0},y_{0})$ in (\ref{eq45}) we derive that there exists a constant $\rho\in\mathbb{C}\setminus\{0\}$ such that
\begin{equation}\label{eq46}F=\rho(m-m^{*}).\end{equation}
\par On the other hand $f^{o}=0$. Indeed, if $f^{o}\neq0$ we derive, by substituting (\ref{eq44}) and (\ref{eq46}) in (\ref{eq--43}) that there exist $a,b\in\mathbb{C}$ such that $f^{o}=a\,m+b\,m^{*}$. As $f^{o}$ is odd and $m\neq m^{*}$ we deduce, according to \cite[Theorem 3.18(b)]{H.St}, that $b=-a\neq0$. Hence, by using (\ref{eq013}), we get that $am(y)\,m-am^{*}(y)\,m^{*}=0$ for each fixed $y\in S.$ So, according to \cite[Theorem 3.18(b)]{H.St}, we deduce that $am(y)=0$ for all $y\in S$, which contradicts that $a\neq0$ and $m\neq m^{*}.$\\
Hence, by using (\ref{eq46}), (\ref{eq-43}) and  (\ref{eq000}) we obtain
\begin{equation}\label{eq47}g^{o}=\lambda\rho(m-m^{*})\,\,\text{and}\,\,h^{o}=-\rho(m-m^{*}).\end{equation}
Similarly, using (\ref{eq44}) and that $g^{e}=-\mu f^{e}$ and $h^{e}=\lambda f^{e}$ we obtain
\begin{equation}\label{eq48}g^{e}=-\frac{\mu}{2(\lambda^{2}-2\mu)}(m+m^{*})\,\,\text{and}\,\,h^{e}=\frac{\lambda}{2(\lambda^{2}-2\mu)}(m+m^{*}).\end{equation}
Moreover, by substituting (\ref{eq44}) and (\ref{eq46}) in (\ref{eq--43}) and taking into account that $f^{o}=0$, a small computation using \cite[Theorem 3.18(b)]{H.St} shows that $\rho^{2}=-\frac{1}{4(\lambda^{2}-2\mu)}.$ Now,  using (\ref{eq44}), (\ref{eq47}), (\ref{eq48}) and that any complex-valued function on $S$ is the sum of its even and odd parts we obtain a solution of category (A)(ii).\\
\underline{Subcase B2.2}: $m=m^{*}$. Then
\begin{equation}\label{eq49}f^{e}=\frac{1}{\lambda^{2}-2\mu}m,\,g^{e}=-\frac{\mu}{\lambda^{2}-2\mu}m\,\,\text{and}\,\,h^{e}=\frac{\lambda}{\lambda^{2}-2\mu}m\end{equation}
with $m\neq0$, and (\ref{eq--43}) reduces to
\begin{equation}\label{eq50}(\lambda^{2}-2\mu)f^{o}(x)f^{o}(y)=\left(\lambda f^{o}(x)-h^{o}(x)\right)\left(\lambda f^{o}(y)-h^{o}(y)\right).\end{equation}
We have the following subcases:\\
\underline{Subcase B2.2.1}: $f^{o}\neq0$. Then we get from (\ref{eq50}) that $f^{o}=\gamma(\lambda f^{o}-h^{o})$ for some constant $\gamma\in\mathbb{C}\setminus\{0\}.$ Hence
\begin{equation}\label{eq51} (\lambda\gamma-1)f^{o}=\gamma h^{o}.\end{equation}
\par If $\lambda\gamma-1=0$. Then we deduce from (\ref{eq51}) that $h^{o}=0$ because $\gamma\neq0$. So (\ref{eq50}) implies that $\mu=0$ because $f^{o}\neq0$ and $\lambda\neq0$. Hence, taking (\ref{eq000}) into account, we get that $g^{o}=0$. So, by putting $k:=f^{o}$ and using (\ref{eq49}), we obtain $f=\frac{1}{\lambda^{2}}m+k,\,g=0,\,\,\text{and}\,\,h=\frac{1}{\lambda}m$. We get a solution of category (A)(i).
\par If $\lambda\gamma-1\neq0$. Then (\ref{eq51}) implies that
\begin{equation}\label{eq52}h^{o}=\eta f^{o},\end{equation} where $\eta:=\frac{\gamma}{\lambda\gamma-1}\in\mathbb{C}\setminus\{0\}.$ Moreover, by substituting (\ref{eq52}) in (\ref{eq50}) and using that $f^{o}\neq0$, a small computation shows the identity $\eta^{2}-2\lambda\eta+2\mu=0.$ Then, using (\ref{eq000}), we deduce that $g^{o}=(\mu-\lambda\eta)f^{o}$. So that
\begin{equation}\label{eq53}g^{o}=-\frac{\eta^{2}}{2} f^{o}.\end{equation}
Now, by putting $k:=f^{o}$ and using (\ref{eq49}), (\ref{eq52}) and (\ref{eq53}), we obtain a solution of category (A)(i).\\
\underline{Subcase B2.2.2}: $f^{o}=0$. Then the identities (\ref{eq50}) and (\ref{eq000}) imply that $g^{o}=h^{o}=0.$  So, we go back to subcase A1 and get a solution of category (B) or (C).
\par Conversely, if $f,g$ and $h$ are of the forms (A)-(F) in Theorem \ref{thm2} we check that $f,g$ and $h$ satisfy the functional equation (\ref{eq1}), $f^{e}$ and $h^{e}$ are linearly dependent and $f^{e}(S^{2})\neq\{0\}.$ This completes the proof of Theorem \ref{thm2}.
\end{proof}
\begin{thm}\label{thm3} The solutions of the functional equation (\ref{eq1}) such that $h^{e}$ and $f^{e}$ are linearly independent are the following triples of functions $f,g,h:S\to\mathbb{C}$ where $\chi,\mu,\chi_{1},\chi_{2},\chi_{3}:S\to\mathbb{C}$ are non-zero even multiplicative functions such that $\chi_{1},\chi_{2},\chi_{3}$ are different, $\chi\neq\mu$ and $\chi\neq0$, $\Phi_{0},\Psi_{0}:S\to\mathbb{C}$ are non-zero functions and $\psi,\varphi:S\to\mathbb{C}$ are non-zero even functions
such that $\varphi$ is $\chi$-additive, $\Phi_{0}$ is $0$-additive, $\psi$ is of type $(\chi,\varphi)$-cosine-sine and $\Psi_{0}$ is of type $(0,\sigma,\Phi_{0})$-cosine-sine such that $\Phi_{0}^{*}=\Phi_{0}$ and $\Psi_{0}^{*}=\Psi_{0}$ in parts (F)-(H),
and $k:S\to\mathbb{C}$ a $0$-addive function such that $k^{*}=-k$.\\
$\beta,\delta\in\mathbb{C},\alpha,\lambda,\rho,c\in\mathbb{C}\setminus\{0\}$ are constants such that $2\alpha\lambda^{2} \rho(2-\rho)=1.$\\
(A)
$$\begin{pmatrix} f \\
g \\ h \end{pmatrix}=\begin{pmatrix} 1 & 0 & 0 \\
0 & 1 & 0 \\ 0 & 0 & 1 \end{pmatrix}\begin{pmatrix} \psi\\
\chi \\ \varphi \end{pmatrix}.$$
(B) $$\begin{pmatrix} f \\
g \\ h \end{pmatrix}=\begin{pmatrix} c^{2} & -c^{2} & -c \\
0 & 1 & 0 \\ c & -c & 0 \\ \end{pmatrix}\begin{pmatrix} \mu\\
\chi \\ \varphi \end{pmatrix}.$$
(C) $$\begin{pmatrix} f \\
g \\ h \end{pmatrix}=\begin{pmatrix} -a & a & -ab \\
\frac{1}{2} & \frac{1}{2} & -\frac{b}{2} \\ 0 & 0 & 1\\ \end{pmatrix}\begin{pmatrix} \mu\\
\chi \\ \varphi  \end{pmatrix}.$$
(D)
$$\begin{pmatrix} f \\
g \\ h \end{pmatrix}=\begin{pmatrix} \alpha\rho & \alpha(2-\rho) & -2\alpha \\
\frac{1}{4}\,\rho & \frac{1}{4}\,(2-\rho) & \frac{1}{2} \\ \frac{1}{2\,\lambda} & -\frac{1}{2\,\lambda} & 0 \\ \end{pmatrix}\begin{pmatrix} \chi_{1}\\
\chi_{2} \\ \chi_{3} \end{pmatrix}.$$
(E)
$$\begin{pmatrix} f \\
g \\ h \end{pmatrix}=\begin{pmatrix} 1& 0 & 0 \\
-\frac{\delta^{2}}{2} & 1 & \delta \\ -\delta & 0 & 1 \\ \end{pmatrix}\begin{pmatrix} f_{0}\\
g_{0} \\ h_{0} \end{pmatrix},$$
where the functions $f_{0},g_{0},h_{0}:S\to\mathbb{C}$ are of the forms (A)-(D) with the same constraints.\\
(F)
$$\begin{pmatrix} f \\
g \\ h \end{pmatrix}=\begin{pmatrix} 1& 0\\
-\frac{\beta^{2}}{2}& \beta\\ -\beta&  1 \\ \end{pmatrix}\begin{pmatrix} \Psi_{0}\\
\Phi_{0}\end{pmatrix}+\begin{pmatrix} k \\
-\frac{\beta^{2}}{2}k\\ -\beta k \end{pmatrix}.$$
(G) $$\begin{pmatrix} f \\
g \\ h \end{pmatrix}=\begin{pmatrix}
c^{2}& -c\\ \frac{c\beta(2-c\beta)}{2}&\frac{c\beta^{2}}{2}\\
c(1-c\beta)&c\beta\end{pmatrix}\begin{pmatrix}\mu\\
\Phi_{0}\end{pmatrix}+\begin{pmatrix} k \\
-\frac{\beta^{2}}{2}k\\ -\beta k \end{pmatrix}.$$
\end{thm}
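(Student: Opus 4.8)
The plan is to run $(f,g,h)$ through Lemma \ref{lem4}: since $f^{e}$ and $h^{e}$ are linearly independent, the triple falls into exactly one of the three situations (1)--(3) described there, and in each of them the problem reduces to functional equations whose solutions are already available. Situation (1) leads to the cosine--sine addition law (\ref{eq17}), governed by Proposition \ref{prop1} together with the solution of the sine addition law (\ref{qo1}) from \cite[Theorem 4.1]{H.St} and the rigidity statement of Proposition \ref{prop001}; situations (2) and (3) lead respectively to the ``multiplicative Cauchy'' type equations $\phi(xy)=L(x)L(y)$ and $f(x\sigma(y))=l(x)l(y)$. Having identified the underlying multiplicative, $\chi$-additive and cosine--sine data, one rebuilds $(f,g,h)$ by means of the linear relations furnished by Lemma \ref{lem4} and Proposition \ref{prop1}; the converse statement, that each displayed family indeed solves (\ref{eq1}) and has $f^{e},h^{e}$ independent, I would verify by direct substitution at the very end.

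Consider first situation (1) of Lemma \ref{lem4}: $f,g,h$ are even, $\{f,g,h\}$ is linearly independent and $(f,g,h)$ satisfies (\ref{eq17}), so $\{f,h\}$ is linearly independent and Proposition \ref{prop1} applies. In alternative (1) we get $g=\chi$ multiplicative (hence $\chi\neq0$, as $g\neq0$), $h-\beta f=\varphi$ with $\varphi$ $\chi$-additive, and $\chi+\beta h=m$ multiplicative; imposing $f^{*}=f,\ g^{*}=g,\ h^{*}=h$ forces $\chi,\varphi,m$ to be even, and solving for $f$ and $h$ — separating $\beta=0$ (where $m=\chi$ and $(f,g,h)=(f,\chi,\varphi)$ has shape (A) with $\psi:=f$) from $\beta\neq0$ (where $f$ and $h$ become explicit combinations of $m,\chi,\varphi$, giving (B)) — produces the corresponding triples. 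In alternative (2), $-\alpha f+g=\chi$ is multiplicative and $(h,\chi+2\alpha f+\tfrac12\beta h)$ solves the sine addition law; by \cite[Theorem 4.1(b)]{H.St} there are multiplicative $\chi_{1},\chi_{2}$ and a scalar $\rho$ with $2\rho h=\chi_{1}-\chi_{2}$ and $\chi+2\alpha f+\tfrac12\beta h=\tfrac12(\chi_{1}+\chi_{2})$. If $\chi_{1}=\chi_{2}$ then $\rho=0$ (else $h=0$), $h$ is a nonzero $\chi_{1}$-additive function — genuinely so, by Proposition \ref{prop001} — and feeding this back into (\ref{eq17}) yields (C), with the two even multiplicatives $\chi_{1}$ and $\chi$; if $\chi_{1}\neq\chi_{2}$, evenness forces $\chi_{2}=\chi_{1}^{*}$, and substituting into (\ref{eq17}) pins down the constraint $2\alpha\lambda^{2}\rho(2-\rho)=1$ and gives (D) with $\chi_{3}:=\chi$. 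Alternative (3) is precisely the passage $(f,g,h)=M_{\delta}(F,G,H)$ with $(F,G,H)$ of one of the previous shapes and with evenness of $(F,G,H)$ forcing $\delta$ to enter only through the indicated matrix, i.e.\ form (E).

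In situation (2) of Lemma \ref{lem4} we have $g^{e}=\tfrac{\beta^{2}}{2}f^{e}+\beta h^{e}$, $f^{o}(xy)=0$, $g^{o}=-\tfrac{\beta^{2}}{2}f^{o}$, $h^{o}=-\beta f^{o}$, and $(f^{e},g^{e},h^{e})$ solves the cosine--sine addition law. Substituting the expression for $g^{e}$ into that law and into identity (\ref{eq3}) of Lemma \ref{lem0} collapses it to $f^{e}(xy)=L(x)L(y)=f^{e}(x\sigma(y))$ with $L:=\beta f^{e}+h^{e}$ even (note $L\neq0$, since $L=0$ would force $h^{e}=-\beta f^{e}$). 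Comparing the associative bracketings of $f^{e}(xyz)$ gives $L(xy)L(z)=L(x)L(yz)$, which, using also the centrality of $f^{e}$, forces $L$ to be either $0$-additive or a nonzero scalar multiple $c\mu$ of an even multiplicative function, with $f^{e}=c^{2}\mu$ on $S^{2}$ in the latter case. The first alternative makes $\Psi_{0}:=f^{e}$ of type $(0,\sigma,\Phi_{0})$-cosine-sine with $\Phi_{0}:=L$, i.e.\ form (F); the second gives form (G) after writing $\Phi_{0}:=-c^{-1}(f^{e}-c^{2}\mu)$. In both cases the odd part is $(f^{o},g^{o},h^{o})=(k,-\tfrac{\beta^{2}}{2}k,-\beta k)$ with $k:=f^{o}$ a $0$-additive odd function — exactly the correction column appearing in (F) and (G) — and the independence of $f^{e}$ and $h^{e}$ translates into the stated non-degeneracy of the data.

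Finally, in situation (3) of Lemma \ref{lem4} one has $g=\tfrac{\beta^{2}}{2}f+\beta h$ and $f(x\sigma(y))=l(x)l(y)$ with $l:=\beta f+h$ and $l^{o}\neq0$. Replacing $y$ by $\sigma(y)$ yields $f(xy)=l(x)l^{*}(y)$, and comparing the bracketings of $f(xyz)$ gives $l(xy)l^{*}(z)=l(x)l^{*}(yz)$; since $l\neq0$ this forces $l(xy)=l(x)\chi(y)$ for a multiplicative $\chi$. If $\chi\neq0$, combining this with its image under $\sigma$ makes $l$ a scalar multiple of an even multiplicative function, contradicting $l^{o}\neq0$; hence $\chi=0$, so $l$ is $0$-additive and $f$ (hence $g,h$) vanishes on $S^{3}$, and a further argument then shows that $f^{e}(xy)=l^{e}(x)l^{e}(y)-l^{o}(x)l^{o}(y)$ together with $h^{e}=l^{e}-\beta f^{e}$ cannot coexist with the linear independence of $f^{e}$ and $h^{e}$ — so situation (3) does not occur. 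I expect this last exclusion to be the chief obstacle, together with the bookkeeping in situation (1) needed to keep the overlapping families (A)--(E) apart; once both are handled, the theorem follows after the routine converse check.
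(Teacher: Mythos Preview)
Your treatment of situations (1) and (2) of Lemma~\ref{lem4} is broadly on the right track, and your direct reduction of situation (2) to the equation $f^{e}(xy)=L(x)L(y)$ with $L=\beta f^{e}+h^{e}$ is in fact cleaner than the paper's approach, which runs $(f^{e},g^{e},h^{e})$ through all five cases of \cite[Theorem 4.2]{AjElq2} and then eliminates the incompatible ones via Proposition~\ref{prop001} and \cite[Theorem 3.18(b)]{H.St}. (One small slip: in situation (1), alternative (2), evenness of $f,g,h$ forces $\chi_{1}^{*}=\chi_{1}$ and $\chi_{2}^{*}=\chi_{2}$, not $\chi_{2}=\chi_{1}^{*}$.)

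The genuine gap is your handling of situation (3). You assert that ``situation (3) does not occur,'' but it does, and the paper extracts from it the solutions of type (F) with $k=0$. Your own argument gets as far as showing $l$ is $0$-additive (equivalently $l(xy)=0$ for all $x,y\in S$), which is correct and matches what the paper proves via Lemma~\ref{lem0}(3). But your final step---that $f^{e}(xy)=l^{e}(x)l^{e}(y)-l^{o}(x)l^{o}(y)$ and $h^{e}=l^{e}-\beta f^{e}$ are incompatible with the linear independence of $f^{e}$ and $h^{e}$---is false. Since $l$ vanishes on $S^{2}$, so does $l^{e}$; but $f^{e}$ need not vanish on $S^{2}$ (take any $x_{0}$ with $l^{e}(x_{0})^{2}\neq l^{o}(x_{0})^{2}$ and look at $f^{e}(x_{0}^{2})$). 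Hence $f^{e}$ and $l^{e}$, and therefore $f^{e}$ and $h^{e}$, are genuinely independent. The paper simply sets $\Psi_{0}:=f$ and $\Phi_{0}:=l$, observes that $\Phi_{0}$ is $0$-additive and that $\Psi_{0}(x\sigma(y))=\Phi_{0}(x)\Phi_{0}(y)$ makes $\Psi_{0}$ of type $(0,\sigma,\Phi_{0})$-cosine--sine, and reads off $h=-\beta\Psi_{0}+\Phi_{0}$ and $g=-\tfrac{\beta^{2}}{2}\Psi_{0}+\beta\Phi_{0}$ from $l=\beta f+h$ and $g=\tfrac{\beta^{2}}{2}f+\beta h$, landing in category (F). You need to replace the exclusion argument by this identification; without it you are missing an entire family of solutions.
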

\begin{proof} According to Lemma \ref{lem4} we have the following cases:\\
\underline{Case A}: $f$, $g$ and $h$ are even functions such that $\{f,g,h\}$ is linearly independent and $(f,g,h)$ satisfies (\ref{eq17}), i.e.,
\begin{equation*}f(xy)=f(x)g(y)+g(x)f(y)+h(x)h(y),\,x,y\in S.\end{equation*}
By applying \cite [Theorem 4.2]{AjElq2}, we obtain easily solutions of category (A)-(E), where $f_{0},g_{0},h_{0}:S\to\mathbb{C}$ are of the forms (A)-(D) with the same constraints.\\
\underline{Case B}: $\{f^{e},g^{e},h^{e}\}$ is linearly dependent and $(f^{e},g^{e},h^{e})$ satisfies (\ref{eq--18}), i.e.,
\begin{equation*}f^{e}(xy)=f^{e}(x)g^{e}(y)+g^{e}(x)f^{e}(y)+h^{e}(x)h^{e}(y),\,\,x,y\in S,\end{equation*}
$f^{o}(xy)=0$ for all $x,y\in S$, and
\begin{equation}\label{eq54} g^{e}=\frac{\beta^{2}}{2}f^{e}+\beta h^{e},\,\,g^{o}=-\frac{\beta^{2}}{2}f^{o}\,\,\text{and}\,\,h^{o}=-\beta f^{o}\end{equation} for some constant $\beta\in\mathbb{C}.$ Then, according to \cite [Theorem 4.2]{AjElq2} we have one of the following possibilities:\\
(1) $f^{e}=\psi,\,\,g^{e}=\chi\,\,\text{and}\,\,h^{e}=\varphi$. By using the first identity in (\ref{eq54}) and \cite [Proposition 3.6]{AjElq2} we derive that $\chi=0$. Then
 \begin{equation}\label{eq--55}f^{e}=\psi,\,\,g^{e}=0\,\,\text{and}\,\,h^{e}=\varphi\end{equation} and  $g^{o}=h^{o}=0$. So, by putting $\Psi_{0}:=f^{e}$, $\Phi_{0}:=h^{e}$ and $k:=f^{o}$ we get a solution of category (F) corresponding to $\beta=0$.\\
(2) $f^{e}=c^{2}\,\mu-c^{2}\,\chi-c\,\varphi,\,\,g^{e}=\chi\,\,\text{and}\,\,h^{e}=c\,\mu-c\,\chi$, where $\chi,\mu:S\to\mathbb{C}$ different multiplicative functions and $\varphi:S\to\mathbb{C}$ a non-zero $\chi$-additive function, and $c\in\mathbb{C}\setminus\{0\}$ a constant. Then $\chi^{*}=\chi$, $\mu^{*}=\mu$ and $\varphi^{*}=\varphi.$ In view of first identity in (\ref{eq54}) we get that
\begin{equation}\label{eq-55}c\beta^{2}\varphi=\left(c^{2}\beta^{2}+2c\beta\right)\mu+\left(c^{2}\beta^{2}-2c\beta-2\right)\chi.\end{equation}
Then $c\beta^{2}\varphi\in \text{span}\{\mu,\chi\}$.
Since $\varphi$ is $\chi$-additive, so is $c\beta^{2}\varphi$, then we derive, by using Proposition \ref{prop001}, that $c\beta^{2}\varphi = 0$. So that $\beta=0$ because $c\neq 0$ and $\varphi \neq 0.$ It follows, by using (\ref{eq-55}), that $\chi=0.$ Hence, taking (\ref{eq54}) and the expressions of $f^{e}$, $g^{e}$ and $h^{e}$ into account, we deduce that
\begin{equation}\label{eq056}f^{e}=c^{2}\,\mu-c\,\varphi,\,\,g^{e}=0\,\,\text{and}\,\,h^{e}=c\,\mu\end{equation} and $g^{o}=h^{o}=0$. Moreover $\mu\neq0$ because $\mu\neq\chi.$ So, by putting $\Phi_{0}:=h^{e}$ and $k:=f^{o}$ we get a solution of category (G) corresponding to $\beta=0$.\\
(3) $f^{e}=-a\,\mu+a\,\chi-ab\,\varphi,\,\,g^{e}=\frac{1}{2}\mu+\frac{1}{2}\chi-\frac{b}{2}\,\varphi\,\,\text{and}\,\,h^{e}=\varphi$, where $\chi,\mu:S\to\mathbb{C}$ different multiplicative functions and $\varphi:S\to\mathbb{C}$ a non-zero $\chi$-additive function and $a,b\in\mathbb{C}\setminus\{0\}$ are constants such that $1+ab^{2}=0.$ Then $\chi^{*}=\chi$, $\mu^{*}=\mu$ and $\varphi^{*}=\varphi.$ By using the first identity in (\ref{eq54}) and the identity $1+ab^{2}=0$ we obtain after a small computation
$$\frac{(b+\beta)^{2}}{2b}\varphi=\frac{1+a\beta^{2}}{2}\mu+\frac{1-a\beta^{2}}{2}\chi.$$
Proceeding as in (2) we prove that $\beta=-b$, $\chi=0$ and $\mu\neq0$. So that \begin{equation}\label{eq0056}f^{e}=-a\,\mu-ab\,\varphi,\,\,g^{e}=\frac{1}{2}\mu-\frac{b}{2}\,\varphi\,\,\text{and}\,\,h^{e}=\varphi,\end{equation} and using (\ref{eq54}) we get that  $g^{o}=-\frac{b^{2}}{2}f^{o}\,\,\text{and}\,\,h^{o}=bf^{o}.$ So, by putting $\Phi_{0}:=h^{e}$ and $k:=f^{o}$ we get a solution of category (H) corresponding to $\beta=-b$.\\
(4) $f^{e}=\alpha\rho\,\chi_{1}+\alpha(2-\rho)\chi_{2}-2\,\alpha\,\chi_{3},\,\,g^{e}=\frac{1}{4}\rho\,\chi_{1}+\frac{1}{4}(2-\rho)\chi_{2}+\frac{1}{2}\:\chi_{3}$\\
and $h^{e}=\frac{1}{2\lambda}\chi_{1}-\frac{1}{2\lambda}\chi_{2},$ where $\alpha,\lambda,\rho\in\mathbb{C}\setminus\{0\}$ are constants such that $2\alpha\lambda^{2} \rho(2-\rho)=1$ and ;
$\chi_{1},\,\chi_{2},\,\chi_{3}:S\to\mathbb{C}$ are three different
multiplicative functions. Then $\chi_{1}=\chi_{1}^{*},\,\,\chi_{2}=\chi_{2}^{*}\,\,\text{and}\,\,\chi_{3}=\chi_{3}^{*}.$ By using first identity in (\ref{eq54}) we obtain
\begin{equation}\label{eq56}\begin{split}&\left(\frac{\rho}{4}-\frac{\rho}{2}\alpha\beta^{2}-\frac{\beta}{2\lambda}\right)\chi_{1}
+\left(\frac{1}{2}-\alpha\beta^{2}-\frac{\rho}{4}+\frac{\rho}{2}\alpha\beta^{2}+\frac{\beta}{2\lambda}\right)\chi_{2}\\
&\quad\quad\quad+\left(\frac{1}{2}+\alpha\beta^{2}\right)\chi_{3}=0.\end{split}\end{equation}
Since coefficients of the linear combination in (\ref{eq56}) can not be zero at the same time and the multiplicative functions $\chi_{1},\,\chi_{2},\,\chi_{3}$ are different, we deduce, according to \cite[Theorem 3.18(b)]{H.St}, that  $\chi_{1}=0\,\,\text{or}\,\,\chi_{2}=0\,\,\text{or}\,\,\chi_{3}=0.$
\par If $\chi_{1}=0$ then $\chi_{2}\neq0$ and $\chi_{3}\neq0$. As $\chi_{1}\neq\chi_{2}$ we derive from (\ref{eq56}), using \cite[Theorem 3.18(b)]{H.St}, that
$\frac{1}{2}-\alpha\beta^{2}-\frac{\rho}{4}+\frac{\rho}{2}\alpha\beta^{2}+\frac{\beta}{2\lambda}=0$ and $\frac{1}{2}+\alpha\beta^{2}=0,$ which implies that $2\alpha\beta^{2}=-1$ and $\beta=-\lambda(2-\rho).$ So that $-\rho=2-\rho$ because $2\alpha\lambda^{2} \rho(2-\rho)=1$ , which is a contradiction. Similarly, supposing $\chi_{2}=0$ or $\chi_{3}=0$ we get a contradiction.
\par Hence, the present possibility does not arise.\\
(5) \begin{equation}\label{eq57}f^{e}=F_{0},\,\,g^{e}=-\frac{\delta^{2}}{2}F_{0}+G_{0}+\delta H_{0}\,\,\text{and}\,\,h^{e}=-\delta F_{0}+H_{0},\end{equation} where $\delta\in\mathbb{C}$ is a constant and the functions $F_{0},G_{0},H_{0}:S\to\mathbb{C}$ are of the forms (A)-(D) with the same constraints to the ones in \cite [Theorem 4.2(1)-(4)]{AjElq2}. So $F_{0},G_{0}$ and $H_{0}$ are even and the triple $(F_{0},G_{0},H_{0})$ satisfies the cosine-sine functional equation. Moreover, a small computation, using first identity in (\ref{eq54}), shows that $G_{0}=\frac{\gamma^{2}}{2}F_{0}+\gamma H_{0}$, where $\gamma:=\beta-\delta$. So, we deduce as above that $F_{0},G_{0},H_{0}$ are of the forms in (\ref{eq--55}) or (\ref{eq056}) or (\ref{eq0056}), and by analogy with (1), (2) and (3) we have the following possibilities:\\
(5-1): $F_{0},G_{0},H_{0}$ are of the forms in (\ref{eq--55}), i.e., $$F_{0}=\psi,\,\,G_{0}=0,\,\,H_{0}=\varphi\:\:\text{and}\,\,\gamma=0.$$
Then, applying (\ref{eq57}), we get that
$$f^{e}=\psi,\,\,g^{e}=-\frac{\beta^{2}}{2}\psi+\beta\varphi\,\,\text{and}\,\,h^{e}=-\beta\psi+\varphi.$$ Hence, taking (\ref{eq54}) into account, obtain a solution of category (F) by putting $k:=f^{o}.$\\
(5-2): $F_{0},G_{0},H_{0}$ are of the forms in (\ref{eq056}), i.e.,
$$F_{0}=c^{2}\,\mu-c\,\varphi,\,\,G_{0}=0,\,\,H_{0}=c\,\mu\,\,\text{and}\,\,\gamma=0,$$
which implies, by using (\ref{eq57}), that
\begin{equation*}f^{e}=c^{2}\,\mu-c\,\varphi,\,\,g^{e}=\frac{c\,\beta(2-c\,\beta)}{2}\mu+\frac{c\,\beta^{2}}{2}\varphi
\,\,\text{and}\,\,h^{e}=c(1-c\,\beta)\mu+c\,\beta\varphi.\end{equation*}
Combining this with (\ref{eq54}) we obtain a solution of category (G) by putting $k:=f^{o}.$\\
(5-3): $F_{0},G_{0},H_{0}$ are of the forms in (\ref{eq0056}), i.e.,
$$F_{0}=-a\,\mu-ab\,\varphi,\,\,G_{0}=\frac{1}{2}\mu-\frac{b}{2}\,\varphi,\,\,H_{0}=\varphi\,\,\text{and}\,\,\gamma=-b.$$
Then, by a small computation (\ref{eq57}) leads to
\begin{equation*}f^{e}=-a\,\mu-ab\,\varphi,\,\,g^{e}=\frac{1+a(b+\beta)^{2}}{2}\mu+\frac{ab\beta^{2}}{2}\varphi
\,\,\text{and}\,\,h^{e}=a(b+\beta)\mu+ab\,\beta\varphi.\end{equation*}
So, by putting $c:= ab$, $k:=f^{o}$ and taking (\ref{eq54}) into account, we obtain a solution of category (G).\\
\underline{Case C}: There exists a constant $\beta\in\mathbb{C}$ such that $\beta f^{o}+h^{o}\neq0$ and
\begin{equation}\label{eq58}g=\frac{\beta^{2}}{2}f+\beta h,\end{equation} and the pair $(f,l)$ satisfies the functional equation (\ref{eq+17}), i.e.,
\begin{equation}\label{eq59}f(x\sigma(y))=l(x)l(y),\,x,y\in S,\end{equation}
where
\begin{equation}\label{eq60}l:=\beta f+h.\end{equation}
Let $x,y,z\in S$ be arbitrary. By applying (\ref{eq+17}) to the pair $(\sigma(x),y)$ and $(x,\sigma(y))$ we get the identities $f^{*}(xy)=l^{*}(x)l(y)$ and $f(xy)=l(x)l^{*}(y)$, which gives by subtraction $2f^{o}(xy)=l(x)l^{*}(y)-l^{*}(x)l(y).$
When we apply this to the pair $(x,\sigma(y))$ we obtain $2f^{o}(x\sigma(y))=l(x)l(y)-l^{*}(x)l^{*}(y).$
By addition of the two last identities we deduce that $f^{o}(xy)+f^{o}(x\sigma(y))=2l^{o}(x)l^{e}(y).$
So that
\begin{equation*}f^{o}(xyz)+f^{o}(xy\sigma(z))=2l^{o}(xy)l^{e}(z)\,\,\text{and}\,\,f^{o}(xyz)+f^{o}(x\sigma(yz))=2l^{o}(x)l^{e}(yz).\end{equation*}
As $l^{o}=\beta f^{o}+h^{o}\neq0$ and $l^{e}=\beta f^{e}+h^{e}\neq0$ because $\{f^{e},h^{e}\}$ is linearly independent, we deduce from the identity above, by using Lemma \ref{lem0}(3), that $l^{o}(xy)=0$ and $l^{e}(yz)=0$. Hence, $x,y,z\in S$ being arbitrary, $l(xy)=0$ for all $x,y\in S.$ So, by putting $\Psi_{0}:=f$ and $\Phi_{0}:=l$ we get that $$\Phi_{0}(xy)=\Phi_{0}(x)\cdot0+0\cdot\Phi_{0}(y),$$  for all $x,y\in S,$  and in view of (\ref{eq59}), we have $$\Psi_{0}(x\sigma(y))=\Psi_{0}(x)\cdot0+0\cdot\Psi_{0}(y)+\Phi_{0}(x)\Phi_{0}(y),$$ for all $x,y\in S.$ Notice that $f$ and $l$ are non-zero because $f^{e}\neq0$ and $l^{e}\neq0$, so are $\Psi_{0}$ and $\Phi_{0}$. Moreover, by using (\ref{eq60}) and (\ref{eq58}) we derive that $$h=-\beta\Psi_{0}+\Phi_{0}\,\,\text{and}\,\,g=-\frac{\beta^{2}}{2}\Psi_{0}+\beta_{0}.$$ So, we obtain a solution of category (F) corresponding to $k=0$.
\par Conversely, if $f,g$ and $h$ are of the forms (A)-(I) in Theorem \ref{thm3} we check that $f,g$ and $h$ satisfy the functional equation (\ref{eq1}), and that $f^{e}$ and $h^{e}$ are linearly independent. This completes the proof of Theorem \ref{thm3}.
\end{proof}




\end{document}